\documentclass[11pt,reqno]{amsart}

\usepackage[margin=1.22in]{geometry}
\usepackage{graphicx}
\usepackage{mathrsfs}
\usepackage{amssymb}
\usepackage{amsmath}
\usepackage{mathtools}
\usepackage{tikz}
\usepackage{tikz-cd}
\usepackage{enumitem}
\usepackage{microtype}
\usepackage{lmodern}
\usepackage{csquotes}
\usepackage{nicefrac}
\usepackage{comment}
\usepackage[colorlinks=true, urlcolor=blue, citecolor=blue, linkcolor=blue]{hyperref}

\numberwithin{equation}{section}
\numberwithin{equation}{section}

\theoremstyle{definition}
\newtheorem{theorem}{Theorem}[section]
\newtheorem*{theorem*}{Theorem}
\newtheorem{proposition}[theorem]{Proposition}
\newtheorem{lemma}[theorem]{Lemma}
\newtheorem{corollary}[theorem]{Corollary}
\newtheorem{remark}[theorem]{Remark}
\newtheorem{definition}[theorem]{Definition}
\newtheorem{question}{Question}
\newtheorem{exam}[theorem]{Example}

\newtheorem{theorema}{Theorem}

\newcommand{\R}{\mathbf R}
\newcommand{\Z}{\mathbf Z}
\newcommand{\CP}{\mathbf{CP}}

\DeclareMathOperator{\Scal}{Scal}
\DeclareMathOperator{\Vol}{Vol}
\DeclareMathOperator{\stsys}{stsys}
\DeclareMathOperator{\comass}{comass}
\DeclareMathOperator{\rank}{rank}
\DeclareMathOperator{\ch}{ch}
\DeclareMathOperator{\Td}{Td}
\DeclareMathOperator{\tors}{tors}

\newcommand{\Ah}{\widehat A}
\newcommand{\Ahcw}{\widehat A\text{-}\mathrm{cw}}
\newcommand{\Ahcwline}{\widehat A\text{-}\mathrm{cw}^{\mathrm{line}}}

\newcommand{\norm}[1]{\left\lVert #1\right\rVert}
\newcommand{\ip}[2]{\left\langle #1,#2\right\rangle}

\title{Stable systolic inequalities via mod n covering}
\author{Aditya Kumar}
\address{Department of Mathematics, University of Maryland, 4176 Campus Dr, College
Park, MD 20742, USA}
\date{}

\begin{document}

\begin{abstract}
We introduce a mod $n$ covering based approach to stable systolic inequalities. The idea is to prescribe a cohomology class mod \(n\) which forces the desired cup product or index to be nonzero, and then find a short integral lift of that class. The method is especially effective in rank two as we can compute the covering constant. As a curvature free application, we improve the stable two systolic bound for \(S^2\times S^2\) to \(2\). The same bound holds for every oriented four manifold with \(b_2=2\). Under a positive scalar curvature lower bound, the mod \(n\) covering method combined with a sharp cowaist inequality for line bundles gives stable two systolic bounds. This gives the sharp stable two systolic inequality for odd complex projective spaces and an \(O(m\log m)\) bound for \((S^2)^m\) when scalar curvature is at least \(2m\). For \(S^2\times S^2\) one gets that every metric with scalar curvature at least \(4\) has stable two systole at most \(8\pi\).
\end{abstract}

\maketitle
\tableofcontents

\section{Introduction}

\subsection{Background} Systolic inequalities compare the minimal volume of nontrivial $k$-cycles on a Riemannian manifold with the total volume. For ordinary homological systoles, such inequalities are often false for \(k\)-cycles with \(k >1\). This phenomenon is known as systolic freedom: in many settings one can make the total volume small while keeping the $k$-volume all nontrivial $k$-cycles large \cite{gromov-systolic,babenko-katz,Katz-suciu,freedman}. Stable systoles are designed to avoid this issue by replacing the minimal volume by an asymptotic minimal volume.

More precisely, the stable norm of an integral homology class is obtained by representing large multiples of the class, dividing its minimal volume by the multiplicity, and passing to the limit(cf. \cite[section 5]{Federer}). Heuristically, this operation removes some of the nonlinear behavior of individual cycles. The result is a norm, called the stable norm, on the finite dimensional vector space \(H_k(M;\R)\), and the integral homology classes(modulo torsion) form a lattice inside this normed space. The stable systole is just the stable norm of the shortest nonzero lattice vector. The dual norm on \(H^k(M;\R)\) is the comass norm. Thus stable systolic questions become questions about a lattice and its dual. This is the basic reason stable systoles are accessible to cohomological and convex geometry methods, while ordinary systoles often are not. The classical theory begins with this observation. 

The first optimal stable systolic inequality is for complex projective spaces \cite[Section 4.4]{gromov-metric} and relies on cup product and Wirtinger inequality. For rank greater than one, the work \cite{bangert-katz} used cup products together with a successive minima argument in the spirit of Minkowski to prove curvature free inequalities for stable systoles. In the successive minima argument one finds several short independent vectors in the dual cohomology lattice such that they also generate the fundamental cohomology class via cup product. In an elegant recent work \cite{Stryker}, the short independent classes framework of \cite{GHK} is combined with index theory to establish stable \(2\)-systole bounds under positive scalar curvature for a class of manifolds\footnote{The results of \cite{Stryker} apply to all closed spin $2m$-dimensional manifolds whose fundamental cohomology class can be written as a cup product of classes in $H^2(M)$; this includes $S^2 \times T^n, (S^2)^m, \CP^{2n+1}$.}, that inlcudes the important and challenging case of \(S^2\times S^2\). A sharp result in the rank one case of $\CP^n$ is obtained in \cite{CHZ}. The present work grew out of an attempt to understand the results of \cite{Stryker} for \(S^2\times S^2\). 

\subsection{Overview of strategy} The idea is simple and is motivated by a desire to replicate Gromov's proof of the optimal systolic inequality for $\CP^n$ in higher ranks. In that argument the top cup power of the \textit{single} generator gives the fundamental class. To mimic this in higher ranks we introduce a different way of choosing the cohomology class. Instead of first finding a collection of short independent classes\cite{bangert-katz,GHK} with a nontrivial cup product, we prescribe a residue class mod \(n\) that obstructs the vanishing of the cup product or relevant index, and then find a short integral lift of that class. This point of view is particularly effective in rank \(2\), where we can compute the sharp value for the smallest possible length of the integral lift. Although this works in general for all \(2\)-essential or \(2k\)-essential manifolds by choosing an appropriate \(n\), we do not emphasise general cases because we are able to show effective improvements only in the rank two case. 

We now illustrate this through the main example that we are interested in: \(S^2\times S^2\). Let \(x,y\in H^2(S^2\times S^2;\Z)\) be the standard generators, so that \(x^2=y^2=0\) and \(xy\neq0\). If \(c=ax+by\), then \(c^2=2ab\,xy\). Thus \(c^2\neq0\) exactly when both coefficients are nonzero. The idea described above is the trivial observation that one way to enforce $c^2 \neq 0$ is to require \(c\equiv x+y \pmod 2.\) as then \(a\) and \(b\) are both odd, in particular non-zero. This is also the condition required for non-zero index, since \(\Ah(S^2\times S^2)=1\), the twisted Dirac index of a unitary line bundle with Chern class \(c\) is \(\ip{\frac{c^2}{2}}{[S^2\times S^2]} \neq 0 \Leftrightarrow c^2 \neq 0.\) 

Therefore, in this case, both the index argument and the cup product argument only require an integral lift of the mod \(2\) residue class represented by \((1,1)\). We will get a sharp estimate on the length of its integral lifts in terms of a covering constant \(\Theta_{2,2}\) (cf. Definition \ref{def:theta}). We will show in Section \ref{sec:theta-two} that $\Theta_{2,2}=2$.

Informally, \(\Theta_{b,n}\) is the smallest universal constant with the following property: in every rank \(b\) normed lattice, every residue class modulo \(n\) in the dual lattice has a representative whose dual norm is at most \(\Theta_{b,n}\) divided by the first minimum (cf. Definition \ref{def:first-minimum}) of the original lattice. Applied to $H_k(M;\Z)/\tors$ with the stable norm, this means that every prescribed mod \(n\) cohomology class has an integral lift with comass bounded by $\Theta_{b,n}/\stsys_2$ (cf. Proposition \ref{lem:basic-lift}).

Thus estimating $\Theta_{b,n}$ is a finite covering problem. Recall that the ordinary covering radius measures how much one must dilate a convex body so that its lattice translates cover the whole vector space(cf. Definition \ref{def:coverrad}). Here we only ask to hit the finite set of residue classes modulo \(n\)(cf. Definition \ref{def:modn-rad}). If a mod \(n\) class is not represented inside a given dilation of the polar body(cf. Definition \ref{def:polar}), then the corresponding translate of the dilated polar body contains no dual lattice point. Therefore a failure of mod \(n\) covering produces a lattice point free translate(cf. Corollary \ref{cor:free}). Lattice width estimates enter at this point. A basic principle in convex geometry is that a lattice point free convex body must be thin in some integral direction; like a pancake. Upper bounds of this nature are called lattice width estimates. In general proving a lattice width estimate for a general convex body that is lattice point free is a very difficult problem \cite{KL88,Banaszczyk1,Banaszczyk2,aw12}. 

However, for our application here, we will only need to consider a special case where the convex body is centrally symmetric about at a half lattice point. In rank two this will allow us to give an elementary proof using parity arguments without appealing to general results in \cite{aw12} (cf. Lemma \ref{lem:half-lattice-width}). This is the key step in establishing \(\Theta_{2,2}=2\). The constant is sharp, as shown by the \(\ell^1\)-unit ball \(\{(u,v): |u|+|v|\le1\}\) and the \((1,1)\) class modulo \(2\). In higher rank, the \(\ell^1\)-ball gives the lower bound \(\Theta_{b,2}\ge b\). We expect the sharp statement to be \(\Theta_{b,2}=b\) but our current argument doesn't completely generalise to rank $>2$ and the best bound \(\Theta_{b,2}=O(b\log b)\) follows from general results in \cite{Banaszczyk2}.  We leave the sharp width estimate in this special case as a question. 

\begin{question}
    Let \(K\subset\R^b\) be a convex body that is centrally symmetric about a point \(z\in\frac12\Z^b\). Suppose \(\operatorname{int}(K)\cap\Z^b=\varnothing\). Show that the lattice width of \(K\) is at most \(b\). This would imply \(\Theta_{b,2}=b\)
\end{question}
Note that Lemma \ref{lem:half-lattice-width} gives an affirmative answer to this question for $b=2$.

\subsection{Curvature free applications}
Suppose \(M^{2m}\) has degree two classes whose cup product detects the fundamental class. As explained earlier the successive minima method finds \(m\) short independent classes. In contrast, the mod \(n\) method gives one degree two class whose \(m\)-th cup power is already nonzero. For \(\CP^m\), this simply recovers Gromov's sharp stable inequality. For the product \((S^2)^m\) we get the following. 

\begin{theorema} \label{thmA}
For every metric on \((S^2)^m\),
\[
\stsys_2((S^2)^m,g)
\le
\Theta_{m,2}\big[\Vol_g((S^2)^m)\big]^{\frac{1}{m}}.
\]
For \(m=2\), since \(\Theta_{2,2}=2\), this gives
\[
\stsys_2(S^2\times S^2,g)
\le
2\sqrt{\Vol_g(S^2\times S^2)}.
\]
\end{theorema}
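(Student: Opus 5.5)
The plan is to imitate Gromov's argument for $\CP^m$ with a single degree two class whose top cup power is nonzero. The class is produced as a short integral lift of a prescribed mod $2$ residue.

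\textbf{Setup and choice of residue.} Let $x_1,\dots,x_m\in H^2((S^2)^m;\Z)$ be the standard generators, pulled back from the factors. Then $x_i^2=0$, and $x_1\cdots x_m$ is dual to the fundamental class. The lattice $H_2(M;\Z)$ has rank $b=m$ and no torsion. For $c=\sum a_ix_i$ we have
\[
c^m=m!\,a_1a_2\cdots a_m\,x_1\cdots x_m,
\]
so $c^m\neq0$ exactly when every $a_i\neq0$. We prescribe the residue $c\equiv x_1+\dots+x_m \pmod 2$. Every lift then has all $a_i$ odd, hence nonzero, and so $|\langle c^m,[M]\rangle|\ge m!$.

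\textbf{Short lift.} Apply Proposition \ref{lem:basic-lift} to the normed lattice $H_2(M;\Z)$ with the stable norm, whose first minimum is $\stsys_2$. We obtain a lift $c$ of this residue with
\[
\comass(c)\le \Theta_{m,2}/\stsys_2(M,g).
\]
By the duality between the stable norm and the comass norm, we may represent $c$ by a closed $2$-form $\omega$ whose pointwise comass is arbitrarily close to $\comass(c)$. We will argue with $\omega$ and pass to the limit at the end.

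\textbf{Wirtinger step.} This is Gromov's estimate. For a $2$-form of comass at most $1$, Wirtinger's inequality gives
\[
\comass(\omega^m)\le m!\,\comass(\omega)^m,
\]
where the factor $m!$ is the one that appears in $\omega^m/m!$ for Kähler forms. Integrating over $M$ then yields
\[
m!\le\Big|\int_M\omega^m\Big|\le m!\,\comass(\omega)^m\Vol_g(M).
\]
Hence $1\le(\Theta_{m,2}/\stsys_2)^m\Vol_g(M)$, which rearranges to $\stsys_2\le\Theta_{m,2}\Vol_g^{1/m}$.

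\textbf{Case $m=2$.} Substituting $\Theta_{2,2}=2$, established in Section \ref{sec:theta-two}, gives the second inequality.

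The main obstacle is to get the normalisations right. First, the $m!$ coming from the cup power must match the $m!$ in the Wirtinger inequality; if it does not, the constant degrades. Second, we need an honest realisation of comass by differential forms, via the Federer duality between the stable norm and comass. Everything else is carried by Proposition \ref{lem:basic-lift} and the definition of $\Theta_{m,2}$.
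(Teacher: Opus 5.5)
Your proposal is correct and follows the paper's proof essentially step for step. The paper likewise takes the all-odd mod $2$ residue (Lemma~\ref{lem:all-odd-class}), obtains a short lift from Proposition~\ref{lem:basic-lift}, and chooses a closed representative whose comass is within $\varepsilon$ of optimal. It then applies the pointwise normal-form bound $|\alpha^m|\le m!\norm{\alpha}_\infty^m$ (Lemma~\ref{lem:wirt-two}), so that the two factors of $m!$ cancel. Finally it lets $\varepsilon\to0$ and uses $\Theta_{2,2}=2$ for $m=2$.
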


The successive minima method gives a constant of the form \((m!)^{\frac{1}{m}}\Gamma_m\) \cite{GHK}. In rank two, this reduces to \(\sqrt2\Gamma_2=3/\sqrt{2}\), since \(\Gamma_2=3/2\)\cite{GHK}. Therefore the constant improves from \(3/\sqrt2\) to \(2\). The expected sharp constant is $1$ for the product of round spheres.

Using Poincare duality like \cite{hebda} we can give show the same result for all closed oriented four manifolds with \(b_2=2\), not only for \(S^2\times S^2\). Choose any nonzero mod \(2\) cohomology class and consider the Poincare dual of its short integral lift. Estimating the mass of the resulting current gives a complementary stable systolic inequality. In dimension four it gives the following.

\begin{theorema} \label{thmB}
Every closed oriented four manifold with \(b_2(M)=b>0\) satisfies
\[
\stsys_2(M,g)
\le
\big(2\Theta_{b,2}\big)^{\frac{1}{2}}\,\sqrt{\Vol_g(M)}.
\]
In particular, for \(b_2(M)=2\) since \(\Theta_{2,2}=2\),
\[
\stsys_2(M,g)
\le
2\sqrt{\Vol_g(M)}.
\]
\end{theorema}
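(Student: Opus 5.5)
The plan is to run the mod $2$ lifting argument in $H^2$ and then pass back to $H_2$ by Poincaré duality, in the spirit of \cite{hebda}. We charge the mass of the dual current against the total volume, so that the systole appears on both sides of the inequality.

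Let $L=H_2(M;\Z)/\tors$, equipped with the stable norm, so that its first minimum is $\stsys_2(M,g)$. Its dual lattice is $L^*=H^2(M;\Z)/\tors$ with the comass norm, and both have rank $b>0$. Fix any nonzero residue class $\alpha\in L^*/2L^*$; one exists since $b>0$. By Proposition \ref{lem:basic-lift} there is an integral lift $c\in L^*$ of $\alpha$ with
\[
\comass(c)\le \frac{\Theta_{b,2}}{\stsys_2(M,g)}.
\]
Since $\alpha\neq 0$, we have $c\neq 0$ in $L^*$. By Poincaré duality on the closed oriented manifold $M$, the class $PD(c)$ is then a nonzero element of $L$. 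It follows that $\stsys_2(M,g)\le \|PD(c)\|_{\mathrm{st}}$.

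To bound $\|PD(c)\|_{\mathrm{st}}$, fix $\varepsilon>0$ and choose a closed $2$-form $\omega$ representing $c$ with $\sup_M\comass(\omega_p)\le \comass(c)+\varepsilon$. Consider the real normal current $T_\omega(\varphi)=\int_M\omega\wedge\varphi$. It is closed and represents $PD(c)$ in $H_2(M;\R)$. By Federer's description of the stable norm as the mass norm on real homology \cite[section 5]{Federer}, we get $\|PD(c)\|_{\mathrm{st}}\le \mathbf M(T_\omega)$. It therefore suffices to establish the pointwise estimate
\[
|(\omega\wedge\varphi)(e_1,\dots,e_4)|\le 2\,\comass(\omega_p)\,\comass(\varphi_p)
\]
for $2$-forms on an oriented Euclidean $4$-space. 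Integrating this estimate gives $\mathbf M(T_\omega)\le 2(\comass(c)+\varepsilon)\Vol_g(M)$.

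The pointwise estimate is the one genuinely geometric step, and I expect it to be the main obstacle, although it should be elementary. I would first put $\omega_p$ in normal form: choose an oriented orthonormal basis with $\omega_p=a\,e^{12}+b\,e^{34}$. Then $\comass(\omega_p)=\max(|a|,|b|)$. For an arbitrary $\varphi$, only the components $\varphi_{12}$ and $\varphi_{34}$ survive in the wedge, so $\omega\wedge\varphi=(a\varphi_{34}+b\varphi_{12})\,\mathrm{vol}$. Each of $|\varphi_{12}|$ and $|\varphi_{34}|$ is bounded by $\comass(\varphi_p)$, since each is $\varphi$ evaluated on a unit simple $2$-vector. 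The constant $2$ is attained by $\omega=\varphi=e^{12}+e^{34}$, which shows the bound cannot be improved.

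Combining the steps and letting $\varepsilon\to0$ gives
\[
\stsys_2(M,g)\le 2\,\comass(c)\,\Vol_g(M)\le \frac{2\,\Theta_{b,2}\,\Vol_g(M)}{\stsys_2(M,g)},
\]
that is, $\stsys_2(M,g)^2\le 2\Theta_{b,2}\Vol_g(M)$. For $b=2$, inserting $\Theta_{2,2}=2$ from Section \ref{sec:theta-two} yields $\stsys_2(M,g)\le 2\sqrt{\Vol_g(M)}$.
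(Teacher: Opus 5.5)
Your proof is correct and follows the paper's argument essentially step for step. You lift a nonzero mod $2$ class via Proposition \ref{lem:basic-lift}, bound $\stsys_2$ by the stable norm of its Poincar\'e dual, and control that stable norm by the mass of $T_\omega$ using the normal-form pointwise constant $2$. The paper does the same, only phrased in general degree as $\stsys_k\stsys_{d-k}\le A_{k,d-k}\Theta_{b_k,2}\Vol$ (Lemma \ref{lem:A22} and Lemma \ref{lem:pd-mass-estimate}) and then specialized to $d=4$.
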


\subsection{Positive scalar curvature applications} In the recent years there have been several works on systolic inequalities in positive scalar curvature \cite{richard,zhu,xu,Sha2026,Orikasa2025}.  The final part of this paper studies stable two systoles under positive scalar curvature lower bounds as in the recent work \cite{Stryker}. The basic idea is as follows:

First, a large stable two systole produces almost flat line bundles. Indeed, a large stable two systole means that the first minimum of the stable norm on \(H_2(M;\Z)/\tors\) is large. By stable norm and comass duality, and by the estimate on mod $n$ covering radius, any residue class mod \(n\) in \(H^2(M;\Z)\) has a representative of small comass. Since integral degree two classes occur as first Chern classes of complex line bundles, Chern--Weil theory realizes such classes as curvature forms of unitary connections. Since, the comass norm is small, the curvature of the corresponding line bundle is also small. 
Second, positive scalar curvature obstructs almost flat index detecting line bundles. Gromov points out in his Four Lectures \cite[Section 3.14.1]{four} that almost flat bundles constrain positive scalar curvature when they are detected by the twisted index. Indeed, if the twisting curvature is sufficiently small compared with the scalar curvature lower bound, then in the Lichnerowicz formula the scalar curvature term dominates the twisting curvature term. The twisted Dirac operator then has trivial kernel and hence zero index, contrary to the index detecting assumption.

Quantitatively, the argument consists of two inequalities. The first is a cowaist scalar curvature inequality, giving an upper bound for line bundle cowaist under a positive scalar curvature lower bound. The second is a cowaist systole inequality, giving a lower bound for line bundle cowaist in terms of the stable two systole. We prove a sharp refinement of cowaist scalar curvature inequality for line bundles (cf. Proposition \ref{prop:refined-cowaist}) and combine it with a cowaist systole inequality (cf. Proposition \ref{prop:theta-cowaist-systole}) obtained involving mod \(n\) covering constant. Therefore one obtains an explicit upperbound on the stable two systole in terms of the covering constant and scalar curvature.

\begin{theorem*}
    Let $(M^{2m},g)$ be a closed spin $2$-essential manifold with $b_2(M)=b>0$ and $\Scal_g \geq \sigma$. Then we have 
\[ \stsys_2(M,g) \leq \frac{8\pi m}{\sigma} \Theta_{b,n}.\]
\end{theorem*}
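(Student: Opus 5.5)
The plan is to combine two inequalities, as outlined above: a lower bound on the line bundle cowaist coming from a large stable systole, and an upper bound coming from $\Scal_g\ge\sigma$ via the twisted Lichnerowicz formula. Here $n$ is chosen so that some residue class $\bar c\in H^2(M;\Z/n)$ detects a nonzero twisted index. By \emph{$2$-essential} I understand exactly this: there is a class $\bar c$ such that every integral lift $c$ satisfies $\ip{\Ah(M)\,e^{c}}{[M]}\neq 0$ (for $S^2\times S^2$ this is $n=2$, $\bar c=(1,1)$).

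\textbf{Step 1 (short lift).} By Proposition \ref{lem:basic-lift}, applied to the lattice $H_2(M;\Z)/\tors$ with the stable norm, $\bar c$ has an integral lift $c$ with
\[
\comass(c)\le \frac{\Theta_{b,n}}{\stsys_2(M,g)},
\]
where $\comass(c)$ is the infimum of comass over closed forms representing $c_\R$. For any $\varepsilon>0$ we fix a closed $2$-form $\omega$ representing $c$ in real cohomology with $\comass(\omega)\le \Theta_{b,n}/\stsys_2+\varepsilon$. Since torsion does not affect the index, we may adjust $c$ by torsion freely.

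\textbf{Step 2 (line bundle).} Chern--Weil theory gives a Hermitian line bundle $L$ with $c_1(L)=c$ and a unitary connection with curvature $F=-2\pi i\,\omega$. Hence $|F|$, measured in the norm relevant for Clifford multiplication, is controlled by $2\pi\comass(\omega)$. The index assumption gives $\operatorname{ind}(D_L)\neq0$.

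\textbf{Step 3 (cowaist versus scalar curvature).} This is the main obstacle, and it should be exactly the sharp refined cowaist inequality, Proposition \ref{prop:refined-cowaist}. We have the Lichnerowicz formula $D_L^2=\nabla^*\nabla+\frac{\Scal}{4}+\mathcal R^L$ with $\mathcal R^L=\frac12 F\cdot$ acting by Clifford multiplication. For a $2$-form with comass $\le\lambda$ on a $2m$-manifold, diagonalize it as $\sum_j\lambda_j e_{2j-1}\wedge e_{2j}$ with $|\lambda_j|\le\lambda$. Clifford multiplication then has operator norm at most $\sum_j|\lambda_j|\le m\lambda$. So pointwise
\[
\mathcal R^L\ge -\tfrac12\cdot 2\pi\, m\,\comass(\omega).
\]
A nonzero index forces this term to cancel $\sigma/4$ somewhere, which gives $\sigma/4\le \pi m\,\comass(\omega)$. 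I would check that the paper's refined proposition indeed yields the constant $\pi m$, i.e.\ $\sigma\le 4\pi m\,\comass$. The target bound $8\pi m/\sigma$ suggests the effective estimate is instead $\sigma\le 8\pi m\,\comass(\omega)$, perhaps because of a factor $2$ in the comass/operator norm convention. I will simply invoke Proposition \ref{prop:refined-cowaist} in whatever normalization it gives, which should be $\comass(\omega)\ge \sigma/(8\pi m)$ for any index-detecting line bundle.

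\textbf{Step 4 (combine).} Proposition \ref{prop:theta-cowaist-systole} packages Steps 1--2. Chaining the two inequalities gives
\[
\frac{\sigma}{8\pi m}\le \frac{\Theta_{b,n}}{\stsys_2}+\varepsilon,
\]
and letting $\varepsilon\to0$ yields $\stsys_2(M,g)\le \frac{8\pi m}{\sigma}\Theta_{b,n}$. The delicate points are the exact constant in Step 3 and verifying that $2$-essentiality supplies a residue class all of whose lifts have nonzero index. The latter is by definition, so Step 3 is where the work lies.
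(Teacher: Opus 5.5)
Your route is the paper's. The paper proves this as Theorem \ref{cor:theta-scalar-general} under exactly the hypothesis you adopt, namely an $\Ah$-detecting residue class mod $n$. It treats the passage from $2$-essential to such a class as a separate easy observation rather than a definition. The proof chains Proposition \ref{prop:theta-cowaist-systole} with Proposition \ref{prop:refined-cowaist}. So if your Step 3 is just a citation of Proposition \ref{prop:refined-cowaist}, your argument is complete.

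However, the computation you actually write in Step 3 is wrong by a factor of $2$. The discrepancy is not a comass or operator-norm convention.

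\begin{itemize}
\item For spinors twisted by a line bundle, $\mathcal R^L=\frac12\sum_{i,j}c(e_i)c(e_j)F(e_i,e_j)=\sum_{i<j}F_{ij}\,c(e_i)c(e_j)$. This is Clifford multiplication by $F$ itself: the $\frac12$ is absorbed by summing over ordered pairs.
\item The $\frac12 F$ you have in mind belongs to the spin$^c$ Lichnerowicz formula. There $F$ is the curvature of the determinant line, which here is $L^{2}$.
\item In skew-normal form, $\mathcal R^L=\sum_j i\lambda_j\,c(e_{2j-1})c(e_{2j})$, a combination of commuting self-adjoint involutions. Its spectrum is $\{\sum_j\pm\lambda_j\}$, so $\|\mathcal R^L\|_{\operatorname{op}}=\sum_j|\lambda_j|$. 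This can be as large as $2\pi m\comass(\omega)$.
\item The correct estimate is therefore $\sigma/4\le 2\pi m\comass(\omega)$, which gives the constant $8\pi m$. That is exactly Proposition \ref{prop:refined-cowaist}.
\end{itemize}

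Your bound $\mathcal R^L\ge-\pi m\comass(\omega)$ is actually false. Take the unit round $S^2$ and $L$ of degree one, so $\comass(\omega)=1/(4\pi)$. Your bound would give $\Scal/4+\mathcal R^L\ge\frac14>0$, hence no harmonic spinors. But the index is $1$, and with the constant-curvature connection there is even a parallel harmonic spinor.

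Carried through, your constant would give $\stsys_2\le 4\pi m\Theta_{b,n}/\sigma$. This already fails for the round $S^2$, which has $\stsys_2=4\pi$ against a claimed bound of $2\pi$. It also fails for Fubini--Study $\CP^m$ with $m$ odd, which has $\stsys_2=\pi$ against a claimed bound of $\pi/2$.

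So discard that computation and use the paper's estimate instead.
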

\begin{remark}
    We note here that this Theorem will be proved in section 6 as Theorem \ref{cor:theta-scalar-general} where instead of $2$-essential, we state it with the condition that there is a residue class mod $n$ that is $\Ah$-detecting. This is not an issue because it is straightforward to show that every $2$-essential spin manifold has an $\Ah$-detecting mod $n$ for some $n$. However, such an $n$ is not universal. The appropriate $n$ will be clear from the manifold. For product of two spheres we will work with $n=2$, i.e., mod $2$ and for complex projective space $\CP^m$ the natural $n$ is $(m+1)$. 
\end{remark}
In particular it gives the following for products of two spheres working mod $2$.
\begin{theorema} \label{thmC}
Let \(g\) be a Riemannian metric on \((S^2)^m\) with \(\Scal_g\ge2m\). Then,
\[
\stsys_2((S^2)^m,g)
\le
4\pi\,\Theta_{m,2} = O(m\log m) .
\]
For $m=2$, \(\Theta_{2,2}=2\). Thus every metric on \(S^2\times S^2\) with \(\Scal_g\ge4\) satisfies
\[
\stsys_2(S^2\times S^2,g)
\le
8\pi.
\]
\end{theorema}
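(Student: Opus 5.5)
The plan is to specialize the general scalar curvature estimate (the unnumbered Theorem above, proved later as Theorem \ref{cor:theta-scalar-general}) to $M=(S^2)^m$ with $n=2$. For this we need three inputs: that $(S^2)^m$ is spin, that $b_2=m$, and that the mod $2$ residue class of $x_1+\dots+x_m$ is $\Ah$-detecting. Here $x_i$ denotes the pullback of the generator of $H^2(S^2;\Z)$ from the $i$-th factor. The first two are immediate: $(S^2)^m$ is simply connected with $w_2=0$, and $H^2((S^2)^m;\Z)=\Z^m$ is spanned by the $x_i$.

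For $\Ah$-detection, take any integral lift $c=\sum a_i x_i$ with all $a_i$ odd. The tangent bundle is stably trivial, since each $TS^2\oplus\underline{\R}$ is trivial, so $\Ah((S^2)^m)=1$ as a characteristic class. The twisted index of the line bundle $L$ with $c_1(L)=c$ is therefore
\[
\ip{e^{c}}{[(S^2)^m]}=\frac{1}{m!}\ip{c^m}{[(S^2)^m]}.
\]
Because $x_i^2=0$, we have $c^m=m!\,a_1\cdots a_m\,x_1\cdots x_m$, so the index equals $a_1\cdots a_m$. This is odd, hence nonzero. Thus every lift of the class $(1,\dots,1)\bmod 2$ has nonzero twisted index, which is exactly the $\Ah$-detecting hypothesis.

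Now apply the general theorem with $\sigma=2m$, $b=m$, $n=2$:
\[
\stsys_2\le \frac{8\pi m}{2m}\Theta_{m,2}=4\pi\Theta_{m,2}.
\]
For $m=2$, Section \ref{sec:theta-two} gives $\Theta_{2,2}=2$, so the bound is $8\pi$.

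For the asymptotic claim $\Theta_{m,2}=O(m\log m)$, I would invoke the reduction described in the introduction. A failure of mod $2$ covering yields a translate of a dilated polar body that contains no dual lattice point and is centered at a half-lattice point (Corollary \ref{cor:free}). Banaszczyk's flatness bound \cite{Banaszczyk2} for symmetric bodies then gives lattice width $O(m\log m)$. Combining this with the width-to-$\Theta$ argument used for $\Theta_{2,2}$ gives the bound. I expect this last step to be the only delicate point: one has to check that the constant-tracking in the width-to-$\Theta$ argument loses nothing beyond a constant factor in higher rank. Everything else is formal once the general theorem is available.
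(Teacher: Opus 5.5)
Your proposal is correct and follows the paper's proof: you show the all-odd mod $2$ class is $\Ah$-detecting using $\Ah=1$ and $c^m=m!\,a_1\cdots a_m\,x_1\cdots x_m$, then apply Theorem \ref{cor:theta-scalar-general} with $\sigma=2m$, $b=m$, $n=2$ and use $\Theta_{2,2}=2$. As in the paper, the $O(m\log m)$ claim rests on citing Banaszczyk, and the step you flag as delicate is not: after normalizing $\lambda_1=1$, a class missed at radius $r$ gives a lattice-point-free translate of width exactly $r$, so the flatness bound transfers with no loss.
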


This improves the \(O(m^4 \log m)\) and \(36\pi\) bounds obtained in \cite{Stryker}. In an earlier version of this article \cite{Kumar2026} we had improved them to $O(m^3 \log m)$ and $12\pi$. The current $8\pi$ bound still fails to be sharp by a factor of $2$ as the expected sharp bound is \(4\pi\) for the product of two unit round spheres. 
\begin{remark}
    We note that both the inequalities that we used: the cowaist scalar curvature inequality as well as the cowaist systole inequality are sharp here but we still don't get the sharp $4\pi$ result. We also emphasise that the curvature free systolic constant for $S^2 \times S^2$ also fails to be sharp by a factor of $2$. 
\end{remark}

In the easier rank one case, one gets a sharp two systolic inequality for odd dimensional complex projective spaces. Note that for $\CP^m$ we will have to work mod $(m+1)$; here one trivially has $\Theta_{1,m+1} = \frac{m+1}{2}$ by Proposition \ref{prop:theta-12}) as $m$ is odd.
\begin{theorema}\label{thmD}
Let $m$ be odd. If $g$ is a Riemannian metric on $\CP^m$ satisfying \(\Scal_g\ge4m(m+1)\) then 
\[   \stsys_2(\CP^m,g)\le \frac{2\pi}{m+1} \Theta_{1,m+1} =  \pi. \]
\end{theorema}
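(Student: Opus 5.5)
The plan is to deduce Theorem~\ref{thmD} from the general scalar curvature bound, Theorem~\ref{cor:theta-scalar-general}, stated in the introduction as $\stsys_2\le \frac{8\pi m}{\sigma}\Theta_{b,n}$, applied with $b=1$, $n=m+1$ and $\sigma=4m(m+1)$. Substituting gives
\[
\stsys_2(\CP^m,g)\le \frac{8\pi m}{4m(m+1)}\Theta_{1,m+1}=\frac{2\pi}{m+1}\Theta_{1,m+1},
\]
and Proposition~\ref{prop:theta-12} gives $\Theta_{1,m+1}=\frac{m+1}{2}$ when $m$ is odd. The right-hand side is then $\pi$. So the work reduces to checking the hypotheses of that theorem for $\CP^m$ with $m$ odd.

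First, $\CP^m$ is spin exactly when $m$ is odd, since $w_2=(m+1)x \bmod 2$. It is closed and $b_2=1$.

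Second, we must exhibit an $\Ah$-detecting residue class modulo $m+1$. Let $x$ be the generator of $H^2(\CP^m;\Z)$ and consider the line bundle $L$ with $c_1(L)=kx$. The twisted index is
\[
\ip{e^{kx}\Ah(\CP^m)}{[\CP^m]} .
\]
Since $\Ah(\CP^m)=\big(\tfrac{x/2}{\sinh(x/2)}\big)^{m+1}$, this equals $\operatorname{Res}_{x=0}\, e^{kx}(2\sinh(x/2))^{-(m+1)}$. Substituting $t=e^x-1$ (with $2\sinh(x/2)=e^{-x/2}(e^x-1)$) turns it into a binomial coefficient,
\[
\binom{k+\frac{m+1}{2}-1}{m}=\binom{k+\frac{m-1}{2}}{m}.
\]
This vanishes exactly for the $m$ consecutive integers $k$ with $|k|\le \frac{m-1}{2}$. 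It is nonzero for every $k\equiv \frac{m+1}{2}\pmod{m+1}$, because such $k$ avoid that window of $m$ consecutive values.

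Hence the residue class of $\frac{m+1}{2}x$ modulo $m+1$ is $\Ah$-detecting. This is consistent with $\Theta_{1,m+1}=\frac{m+1}{2}$: the shortest lift has $|k|=\frac{m+1}{2}$, and the sharp cowaist inequality then compares with the Fubini--Study normalization. I will double-check the residue computation, since it is the only nonformal step. The main point is that the class sits exactly at the boundary of the vanishing window, which is what makes the bound sharp.

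Once the hypotheses hold, the general theorem applies. Tracing through, it uses the mod-$(m+1)$ lift of small comass, $\comass\le \Theta_{1,m+1}/\stsys_2$ (Proposition~\ref{lem:basic-lift}), together with the refined cowaist inequality (Proposition~\ref{prop:refined-cowaist}). The inequality then follows directly.
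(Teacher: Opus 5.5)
Your proposal is correct and follows the paper's route. You apply Theorem~\ref{cor:theta-scalar-general} with $b=1$, $n=m+1$, $\sigma=4m(m+1)$ and the class $\frac{m+1}{2}x \bmod (m+1)$, and you use $\Theta_{1,m+1}=\frac{m+1}{2}$. The only difference is cosmetic: you evaluate the twisted $\Ah$-genus by a residue computation, whereas the paper writes $\Ah(T\CP^m)e^{(m+1)h/2}=\Td$ and uses Hirzebruch--Riemann--Roch to get $\chi(\mathcal O_{\CP^m}(q))=\binom{q+m}{m}$ with $q=k-\frac{m+1}{2}$, which is the same coefficient $\binom{k+\frac{m-1}{2}}{m}$ that you obtain.
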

\begin{remark}
   A straightforward extension of the $\Ah$-cowaist to determinant line bundles of spin$^c$-manifolds gives the following.
   \begin{enumerate}
       \item  For all closed oriented four manifolds with scalar curvature lower bound $\min \Scal_g \geq \sigma > 0$, \[\stsys_2(M,g) \leq \frac{8\pi}{\sigma} \Theta_{b_2(M),6} <C\frac{b_2(M)(1+\log(b_2(M))}{\sigma}.\]
Here one needs to take covering mod 6 as one needs to ensure that the spin$^c$ index is nonzero; a universal way to do this is by taking a mod $3$ class. However, the constant is not sharp even in the $b_2=1,2$ case. On a case by case basis one can get better constant by optimising over $n$ instead of working with universal $6$ but that is somewhat unsatisfactory. Nonetheless this may be seen as a scalar curvature counterpart of Theorem \ref{thmB}.
\item  This also allows extending Theorem \ref{thmD} to even dimensional complex projective spaces as well with the existing proof \textit{mutatis mutandis} because the reason behind the odd assumption is that even dimensional complex projective spaces are not spin. This is done in the recent work \cite{CHZ}.

   \end{enumerate} 
\end{remark}

\subsection{Organisation} The article has two parts. Part A contains sections 2 and 3, and is devoted to convex geometry. In section 2 we recall basic notions in convex geometry, define the mod $n$ covering constants $\Theta_{b,n}$,  and deduce their basic properties.  In section 3, we have an extended discussion on lattice width estimates for lattice point free convex bodies, and finally prove a sequence of lemmas that culminate in the proof of $\Theta_{2,2}=2$. Part B is devoted to stable systoles and contains section 4,5 and 6. In section 4 we recall the relevant preliminaries and show how stable systoles control lifts of mod $n$ residue classes. In section 5 we prove our our curvature free results, Theorem \ref{thmA} and Theorem \ref{thmB}. In section 6 we prove the sharp cowaist inequality and prove the results under a positive scalar curvature lower bounds, Theorem \ref{thmC} and Theorem \ref{thmD}.  

\subsection{Acknowledgements} The author would like to thank Balarka Sen for several helpful conversations. He would also like to thank Kobe Marshall-Stevens and Simone Cecchini for interest in this work.

\section{Mod $n$ covering constants}
\label{sec:theta}

\subsection{Convex geometry preliminaries} Throughout this section, \(V\) will denote a $b$-dimensional real vector space, \(\Lambda\subset V\) will denote a lattice, and let \(K\subset V\) will denote a centrally symmetric convex body. We will use $\norm{.}_{K}$ to denote the norm on $V$ with $K$ as the unit ball. We first define some basic notions from convex geometry and illustrate them through the simple but important example of $(\R^2,\Z^2)$.

\begin{definition}[Polar Body] \label{def:polar}
    Let $V^*$ be the dual of $V$. Then \(K^\circ=\{\alpha\in V^\ast:|\alpha(v)|\le1 \quad \forall v\in K\}\)  is the polar body of $K$,
\end{definition}
\begin{exam}
Let \((V,\Lambda)=(\R^2,\Z^2)\). Identify \(V^*\) with \(\R^2\) using the standard pairing. Let \(B_1^p\) denote the unit ball of the \(\ell^p\)-norm, and let \(q\) be the conjugate exponent, $1/p +1/q =1$. Then, we claim that \( (B_1^p)^\circ=B_1^q. \) 

Indeed, if \(\alpha\in B_1^q\), then for every \(v\in B_1^p\) by Holder's inequality, \(|\alpha(v)|\le \norm{\alpha}_{\ell^q}\norm{v}_{\ell^p}\le 1.\) Hence \(B_1^q\subset (B_1^p)^\circ\). Conversely, if \(\alpha\notin B_1^q\), then \(\norm{\alpha}_{\ell^q}>1\), and by duality of the \(\ell^p\)- and \(\ell^q\)-norms there exists \(v\in B_1^p\) such that \(|\alpha(v)|>1\). Thus \(\alpha\notin (B_1^p)^\circ\), proving the reverse inclusion. In particular, the polar of the \(\ell^1\)-unit ball is the \(\ell^\infty\)-unit ball, and the polar of the \(\ell^\infty\)-unit ball is the \(\ell^1\)-unit ball.
\end{exam}

\begin{definition}[First minimum]\label{def:first-minimum}
    Given $(V,\Lambda,K, \norm{\cdot}_{K})$ the first minimum is the length of the smallest nonzero element of $\Lambda$, i.e.,  
    \(\lambda_1(K,\Lambda)=\min\{\norm{a}_K:0\neq a\in\Lambda\}\)
\end{definition}

\begin{definition}[Covering radius]\label{def:coverrad}
    The covering radius of \(K\) with respect to \(\Lambda\) is the smallest dilation factor $r$ such that if $rK$ is translated by each vector in $\Lambda$ then it covers all of $V$,
\[
\mu(K,\Lambda)=\inf\{r>0:rK+\Lambda=V\}.
\]
\end{definition}
\begin{exam}
Let \((V,\Lambda)=(\R^2,\Z^2)\), and let \(B_1^p\) denote the unit ball of the \(\ell^p\)-norm. It suffices to look at the fundamental domain \([0,1]^2\). For a point \(x=(x_1,x_2)\in[0,1]^2\), its distance to the nearest lattice point is \(\left(
\min\{x_1,1-x_1\}^p+
\min\{x_2,1-x_2\}^p
\right)^{1/p}.\) 

This quantity is maximized at \(x=(1/2,1/2)\). Therefore \(\mu(B_1^p,\Z^2)
=
\norm{(1/2,1/2)}_{\ell^p}
= 2^{\frac1p-1}.\) 
\end{exam}
\subsection{mod $n$ coverings} We now define a mod $n$ version of covering radius as follows.

\begin{definition}[mod $n$ covering radius]\label{def:modn-rad}
    For \(n\ge2\), define the mod \(n\) covering radius of \(K\) with respect to \(\Lambda\) is the smallest dilation factor \(r\) such that \(rK\) contains at least one representative of every class in \(\Lambda/n\Lambda\). Equivalently,
\[
\mu_n(K,\Lambda)
=
\max_{\rho\in\Lambda/n\Lambda}
\min_{\substack{\lambda\in\Lambda\\ \lambda\equiv\rho\pmod n}}
\norm{\lambda}_K.
\]
\end{definition}
\begin{exam}
    Let $(V,\Lambda)=(\R^2,\Z^2)$ and let $B_1^p$ denote the unit ball under the $\ell^p$-norm. Note that $\Z^2/2\Z^2$ consists of four classes mod $2$: $\{(0,0),(0,1),(1,0),(1,1)\}$ and for every $p$, the first three classes are already in $B_1^p$. Therefore, \( \mu_2(B_1^p,\Z^2) = \norm{(1,1)}_{\ell^p}=2^{\frac{1}{p}}.\) 
\end{exam}

 We can now define the rank $b$ mod $n$ covering constant.
\begin{definition}[Rank \(b\) mod \(n\) covering constant]\label{def:theta}
For \(b>0\) and \(n\ge2\), define
\[
\Theta_{b,n}
=
\sup_{(V,\Lambda,K)}
\lambda_1(K,\Lambda)\,
\mu_n(K^\circ,\Lambda^\ast),
\]
where the supremum is taken over all rank \(b\) normed lattices.
\end{definition}
\begin{remark}\label{rem:normalise}
Observe that if replace \(K\) by \(\lambda_1(K,\Lambda)K\) then it does not change the product \(\lambda_1(K,\Lambda)\mu_2(K^\circ,\Lambda^\ast)\), so estimate $\Theta_{b,n}$ one may assume \(\lambda_1(K,\Lambda)=1\) and try to estimate just \(\mu_n(K^\circ,\Lambda^\ast)\). 
\end{remark}

Unwinding the definition gives the following form that we will frequently use. 
\begin{lemma}\label{lem:theta-unwound}
Consider \((V,\Lambda,K)\). For every class \(\rho\in\Lambda^\ast/n\Lambda^\ast\), there exists an integral lift \(\alpha\in\Lambda^\ast\) with \(\alpha\equiv\rho\pmod n\) and
\[
\norm{\alpha}_{K^\circ}
\le
\frac{\Theta_{b,n}}{\lambda_1(K,\Lambda)}.
\]
\end{lemma}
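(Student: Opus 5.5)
The statement follows by unwinding Definitions \ref{def:modn-rad} and \ref{def:theta}; the only care needed is with the case \(\Theta_{b,n}=\infty\) and with whether the minima and supremum are attained. Fix a rank \(b\) normed lattice \((V,\Lambda,K)\) and a class \(\rho\in\Lambda^\ast/n\Lambda^\ast\). If \(\Theta_{b,n}=\infty\) the inequality is vacuous, so I assume \(\Theta_{b,n}<\infty\). Since \((V,\Lambda,K)\) is one of the triples in the supremum of Definition \ref{def:theta}, we have
\[
\lambda_1(K,\Lambda)\,\mu_n(K^\circ,\Lambda^\ast)\le\Theta_{b,n}.
\]
Also \(\lambda_1(K,\Lambda)>0\), because \(\Lambda\) is discrete and \(\norm{\cdot}_K\) is a genuine norm. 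Dividing therefore gives \(\mu_n(K^\circ,\Lambda^\ast)\le \Theta_{b,n}/\lambda_1(K,\Lambda)\).

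Next I would apply Definition \ref{def:modn-rad} to the dual triple \((V^\ast,\Lambda^\ast,K^\circ)\). Here \(K^\circ\) is again a centrally symmetric convex body, since \(K\) is bounded with \(0\) in its interior, and \(\Lambda^\ast\) is a lattice in \(V^\ast\). The quantity \(\mu_n(K^\circ,\Lambda^\ast)\) is a maximum over \(\rho\) of
\[
m(\rho)=\min_{\alpha\equiv\rho}\norm{\alpha}_{K^\circ}.
\]
So for our fixed \(\rho\) we get \(m(\rho)\le\mu_n(K^\circ,\Lambda^\ast)\).

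It remains to check that the minimum defining \(m(\rho)\) is attained, so that some lift realizes it. The coset \(\rho+n\Lambda^\ast\) is a discrete subset of \(V^\ast\). Any ball \(\{\norm{\alpha}_{K^\circ}\le R\}\) is compact and so meets this coset in only finitely many points. Hence the infimum is a minimum, and there is an \(\alpha\in\Lambda^\ast\) with \(\alpha\equiv\rho\pmod n\) and
\[
\norm{\alpha}_{K^\circ}=m(\rho)\le\mu_n(K^\circ,\Lambda^\ast)\le\frac{\Theta_{b,n}}{\lambda_1(K,\Lambda)}.
\]
This is the claimed lift.

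Nothing here is a real obstacle. The only points needing attention are the compactness and discreteness argument guaranteeing that the minimum over the coset is attained, the positivity of \(\lambda_1\), and the fact that polarity preserves the hypotheses of Definition \ref{def:modn-rad}.
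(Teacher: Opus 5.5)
Your proposal is correct and follows the paper's proof, which simply declares the lemma immediate from Definitions \ref{def:modn-rad} and \ref{def:theta}. You write out the routine details the paper leaves implicit: positivity of $\lambda_1(K,\Lambda)$, attainment of the minimum over the discrete coset $\rho+n\Lambda^\ast$, and that $K^\circ$ is again an origin-symmetric convex body.
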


\begin{proof}
This is immediate from Definition \ref{def:modn-rad} and Definition \ref{def:theta}.
\end{proof}
\subsection{$\Theta_{1,n}=\lfloor n/2\rfloor$}

The following is a direct consequence of the definitions. 
\begin{proposition}\label{prop:theta-12}
\(\Theta_{1,n}=\lfloor n/2\rfloor\).
\end{proposition}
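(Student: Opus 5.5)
In rank one the whole configuration is determined up to isomorphism by a single scale, so the plan is to reduce to a standard model and compute directly. Write $V=\R$, $\Lambda=\Z e$ for a generator $e$, and $K=[-s e, s e]$ for some $s>0$. Then $\lambda_1(K,\Lambda)=\norm{e}_K=1/s$. The dual lattice is $\Lambda^\ast=\Z e^\ast$ with $e^\ast(e)=1$. The polar body is
\[
K^\circ=\{t e^\ast : |t|\,s\le 1\}=[-e^\ast/s,\,e^\ast/s],
\]
so that $\norm{t e^\ast}_{K^\circ}=s|t|$. By Remark \ref{rem:normalise} one could also normalise $s=1$ from the start. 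Either way, the product $\lambda_1(K,\Lambda)\,\mu_n(K^\circ,\Lambda^\ast)$ reduces to the purely arithmetic quantity
\[
\max_{\rho\in\Z/n\Z}\ \min_{t\equiv \rho \pmod n} |t| ,
\]
in which the scale $s$ has cancelled. Because this value is the same for every rank one normed lattice, the supremum in Definition \ref{def:theta} is attained trivially.

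The remaining step is to evaluate this quantity. Each residue class $\rho\in\Z/n\Z$ has a representative $t$ with $|t|\le\lfloor n/2\rfloor$: take the representative in $\{0,1,\dots,n-1\}$, and if it exceeds $n/2$, subtract $n$. This gives the upper bound. For the matching lower bound, take the class $\rho=\lfloor n/2\rfloor$. Any representative has the form $\lfloor n/2\rfloor+kn$. For $k\ge0$ this already has absolute value at least $\lfloor n/2\rfloor$. For $k\le -1$ its absolute value is at least $n-\lfloor n/2\rfloor=\lceil n/2\rceil\ge\lfloor n/2\rfloor$. Hence the maximum over classes equals $\lfloor n/2\rfloor$, and therefore $\Theta_{1,n}=\lfloor n/2\rfloor$.

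There is no real obstacle here. The only care needed is to keep the duality pairing straight, namely that $\norm{\cdot}_{K^\circ}$ scales inversely to $\norm{\cdot}_K$, so that the product is scale-free. One should also handle the parity of $n$ uniformly, which the $\lfloor\cdot\rfloor/\lceil\cdot\rceil$ comparison above does. For odd $n=m+1$ (that is, for $m$ even) this recovers $\Theta_{1,m+1}=m/2$. For even $n=m+1$ (that is, for $m$ odd, the case used in Theorem \ref{thmD}) it gives $\Theta_{1,m+1}=(m+1)/2$.
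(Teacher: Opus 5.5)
Your proposal is correct and follows essentially the same route as the paper. Both arguments fix the scale so that the product $\lambda_1\mu_n$ becomes scale-free, and then reduce to the fact that the minimal absolute representatives of $\Z/n\Z$ are $0,\pm1,\dots,\pm\lfloor n/2\rfloor$; your explicit check that the class $\lfloor n/2\rfloor$ has no representative of smaller absolute value spells out the lower bound, which the paper leaves implicit.
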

\begin{proof}
First assume that $n=2$, by Remark \ref{rem:normalise} we may assume that $\lambda_1=1$.  Then a generator of \(\Lambda^\ast\) has dual norm \(1\). The two mod \(2\) classes  in \(\Lambda^\ast\) may be represented by \(0\) and \(1\) and both are already contained in the unit ball. More generally, the same argument gives \(\Theta_{1,n}=\lfloor n/2\rfloor\) since the mod $n$ classes are $(0, \pm1, \cdots, \pm \lfloor n/2\rfloor)$. 
\end{proof}
This will be used in the proof of Theorem \ref{cor:CP-scalar} as for $\CP^m$, it will be natural to work mod $(m+1)$ with the residue class represented by $\lfloor (m+1)/2 \rfloor$.  

\section{$\Theta_{2,2}=2$}
\label{sec:theta-two} In this section we will compute the covering constant $\Theta_b,n$ in the first non-trivial case, i.e., $\Theta_{2,2}=2$. 

\subsection{Lattice width estimates}

If a convex body avoids all lattice points, then it cannot be thick in every lattice direction as it would be too large to fit between lattice points. Hence it must ``look flat'' in at least one direction, i.e., there is some nonzero integral linear functional whose values on the body lie in a short interval. To make this precise, the notion of lattice width of a convex body $K$ with respect to a lattice \(\Lambda\) was introduced in \cite{KL88} and is defined as follows.

\begin{definition}[Lattice width]
The lattice width of \(K\) with respect to \(\Lambda\) is the minimum of its width along vectors in the dual lattice \(\Lambda^\ast\),
\[
w(K,\Lambda)
=
\min_{0\neq u\in\Lambda^\ast}
\left(
\max_{x\in K}u(x)-\min_{x\in K}u(x)
\right).
\]
\end{definition}

Thus \(w(K,\Lambda)\) measures how close \(K\) is to being trapped between two nearby parallel lattice hyperplanes.\footnote{Lattice width is distinct from the classical notion of width as only directions coming from the dual lattice are allowed.} The connection with mod \(n\) covering is mediated by two elementary observations. The first relates the first minimum of a normed lattice to the lattice width of the polar body.

\begin{lemma}\label{lem:polar-width}
Let \(V\) be a finite dimensional real vector space, let \(\Lambda\subset V\) be a lattice, and let \(K\subset V\) be an origin symmetric convex body. Then
\[
w(K^\circ,\Lambda^\ast)=2\lambda_1(K,\Lambda).
\]
\end{lemma}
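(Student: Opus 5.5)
The plan is to unwind both sides directly from the definitions. We identify $(\Lambda^\ast)^\ast$ with $\Lambda$ via the canonical isomorphism $V^{\ast\ast}\cong V$. Then the lattice width of $K^\circ\subset V^\ast$ with respect to $\Lambda^\ast$ is a minimum over nonzero $a\in\Lambda$, regarded as linear functionals $\alpha\mapsto\alpha(a)$ on $V^\ast$:
\[
w(K^\circ,\Lambda^\ast)=\min_{0\neq a\in\Lambda}\Big(\max_{\alpha\in K^\circ}\alpha(a)-\min_{\alpha\in K^\circ}\alpha(a)\Big).
\]
Since $K$ is origin symmetric, $K^\circ$ is origin symmetric as well. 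Hence $\min_{\alpha\in K^\circ}\alpha(a)=-\max_{\alpha\in K^\circ}\alpha(a)$, and the width in direction $a$ equals $2\max_{\alpha\in K^\circ}\alpha(a)$.

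The key step is the bipolar identity $\max_{\alpha\in K^\circ}\alpha(a)=\norm{a}_K$. For the inequality $\le$, we write $a=\norm{a}_K v$ with $v\in K$ (using compactness of $K$ when $a\neq0$). Then for $\alpha\in K^\circ$ we get $\alpha(a)=\norm{a}_K\alpha(v)\le\norm{a}_K$. For the inequality $\ge$, the point $a/\norm{a}_K$ lies on the boundary of $K$, so Hahn--Banach (a supporting hyperplane at that boundary point) provides a functional $\alpha$ with $\alpha(a/\norm{a}_K)=1$ and $\alpha\le1$ on $K$. By symmetry of $K$ this gives $|\alpha|\le1$ on $K$, i.e. $\alpha\in K^\circ$, and therefore $\alpha(a)=\norm{a}_K$.

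Combining these, the width of $K^\circ$ in direction $a$ equals $2\norm{a}_K$. Minimizing over $0\neq a\in\Lambda$ yields $2\lambda_1(K,\Lambda)$ by Definition \ref{def:first-minimum}. The minima are attained because $\Lambda$ is discrete and $\norm{\cdot}_K$ is a norm.

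The only step with content is the supporting hyperplane argument. I expect no real obstacle there, beyond noting that $K$ being a convex body (compact, with the origin in its interior) makes $\norm{\cdot}_K$ a genuine norm and $K^\circ$ compact.
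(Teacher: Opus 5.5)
Your proof is correct and follows the paper's route: identify $(\Lambda^\ast)^\ast$ with $\Lambda$, use the symmetry of $K^\circ$ to write the width in direction $a$ as $2h_{K^\circ}(a)$, then apply the polarity identity $h_{K^\circ}(a)=\norm{a}_K$ and minimize over $0\neq a\in\Lambda$. The only difference is that the paper cites this identity, while you prove it with a supporting hyperplane. That proof is sound; note only that rescaling so that $\alpha(a/\norm{a}_K)=1$ requires $\alpha(a/\norm{a}_K)>0$, which holds because $0\in\operatorname{int}K$.
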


\begin{proof}
We identify \((\Lambda^\ast)^\ast\) with \(\Lambda\). By definition, \(w(K^\circ,\Lambda^\ast)\) is the minimum, over all nonzero \(v\in\Lambda\), of the width of \(K^\circ\) in the direction \(v\). Since \(K^\circ\) is origin symmetric, this width is \(2h_{K^\circ}(v)\), where \(h_{K^\circ}\) is the support function of \(K^\circ\). By polarity, \(h_{K^\circ}(v)=\norm{v}_K\). Hence \(w(K^\circ,\Lambda^\ast)=2\min_{0\neq v\in\Lambda}\norm{v}_K=2\lambda_1(K,\Lambda)\).
\end{proof}

The second observation converts a missing representative of a mod $n$ residue class into a lattice point free translate.

\begin{lemma}\label{lem:residue-translate}
Let \(K\subset V\) be a convex body, let \(\Lambda\subset V\) be a lattice, let \(n\ge2\), and let \(a\in\Lambda\). The mod $n$ class \((a+n\Lambda)\) intersects \( rK\) if and only if \(-a/n+(r/n)K\) contains a point of \(\Lambda\). 
\end{lemma}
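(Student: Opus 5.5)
This is a direct unwinding of definitions, and I will prove both implications at once by writing down an explicit bijection between the two sets of lattice points involved. Throughout I use only that $n\Lambda\subset\Lambda$ and that scaling by $1/n$ maps $rK$ onto $(r/n)K$. No symmetry or convexity of $K$ is needed.

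\emph{Forward direction.} Suppose the class $a+n\Lambda$ meets $rK$. Then there is $\lambda\in\Lambda$ with $a+n\lambda\in rK$. Dividing by $n$ gives $a/n+\lambda\in (r/n)K$. Rearranging, $\lambda\in -a/n+(r/n)K$, so this translate contains the lattice point $\lambda$.

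\emph{Reverse direction.} Suppose $\lambda\in\Lambda$ lies in $-a/n+(r/n)K$. Then $\lambda+a/n\in(r/n)K$. Multiplying by $n$ gives $a+n\lambda\in rK$. Since $a+n\lambda\equiv a\pmod n$ and $a+n\lambda\in\Lambda$, the class $a+n\Lambda$ meets $rK$.

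In effect, the affine map $x\mapsto a+nx$ carries the lattice points of $-a/n+(r/n)K$ bijectively onto the representatives of $a+n\Lambda$ lying in $rK$. The only point needing care is the sign convention in the translate: the translate is by $-a/n$, so the identity is $\lambda\in -a/n+(r/n)K\iff a+n\lambda\in rK$. Beyond that, each step is a single line.
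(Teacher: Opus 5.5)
Your proposal is correct and follows essentially the same approach as the paper: both arguments note that $a+n\Lambda$ meets $rK$ exactly when some $\lambda\in\Lambda$ satisfies $a+n\lambda\in rK$, and then rescale by $1/n$ to get $\lambda\in -a/n+(r/n)K$. Your remark that the map $x\mapsto a+nx$ gives a bijection between the two sets of lattice points is a harmless strengthening of the same computation.
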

\begin{proof}
The class \(a+n\Lambda\) meets \(rK\) if and only if there exists \(\lambda\in\Lambda\) such that \(a+n\lambda\in rK\). This is equivalent to \(\lambda\in -a/n+(r/n)K\). 
\end{proof}
\begin{corollary} \label{cor:free}
\((a+n\Lambda)\cap rK=\varnothing\), then \(-a/n+(r/n)K\) is lattice point free.
\end{corollary}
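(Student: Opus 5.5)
This is the contrapositive of Lemma \ref{lem:residue-translate}, so the plan is to invoke that lemma directly rather than reprove anything. Fix \(a\in\Lambda\), \(n\ge2\) and \(r>0\), and suppose \((a+n\Lambda)\cap rK=\varnothing\). The goal is \(\bigl(-a/n+(r/n)K\bigr)\cap\Lambda=\varnothing\). Lemma \ref{lem:residue-translate} says that \(a+n\Lambda\) meets \(rK\) if and only if \(-a/n+(r/n)K\) contains a lattice point. Negating both sides of this equivalence gives exactly the claim.

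For completeness, I would also record the one-line direct argument. Suppose \(\lambda\in\Lambda\cap\bigl(-a/n+(r/n)K\bigr)\). Then \(n\lambda+a\in rK\), and \(n\lambda+a\in a+n\Lambda\), so \(a+n\Lambda\) meets \(rK\), a contradiction. Note that neither convexity nor central symmetry of \(K\) is used here; the statement is purely set-theoretic.

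There is essentially no obstacle. The only point to flag is how the corollary will be used. The intended application is with \(K^\circ\) and \(\Lambda^\ast\) in place of \(K\) and \(\Lambda\), and with \(r\) slightly less than the mod \(n\) covering radius. Since \(rK\) is closed, a class that is missed by \(rK\) is also missed by a slightly larger dilate. So the translate stays lattice point free even after a small enlargement, and one may pass to its interior when applying lattice width bounds later.
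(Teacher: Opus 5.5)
Your proposal is correct and matches the paper's approach. The paper gives no separate proof of this corollary; it is the immediate contrapositive of Lemma \ref{lem:residue-translate}, and your direct one-line argument ($\lambda\in -a/n+(r/n)K$ implies $a+n\lambda\in rK$) is exactly the computation in that lemma's proof.
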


Together these observations give the mechanism to control mod $n$ radius by the lattice width. By remark \ref{rem:normalise} we may normalise \(\lambda_1(K,\Lambda)=1\). Lemma~\ref{lem:polar-width} gives \(w(K^\circ,\Lambda^\ast)=2\). If a class \(a+n\Lambda^\ast\) fails to meet \(rK^\circ\), then Corollary~\ref{cor:free} gives a lattice point free translate \(-a/n+(r/n)K^\circ\). This translate has width \(2r/n\). Thus any lattice width estimate for lattice point free translates gives an upper bound on \(r\), i.e., gives an upper bound on the mod $n$ covering radius $\mu_n(K^\circ,V^*)$. Since, we normalised $\lambda_1(K,\Lambda)=1$, this is equivalent to an upper bound on $\Theta_{b,n}$.

In general, proving sharp lattice width estimate for arbitrary lattice point free convex bodies is a difficult question \cite{KL88}. The current best estimate is $O(b\log b)$ by the work \cite{Banaszczyk2}. In rank two, a sharp estimate is proved in \cite{aw12} fixing a gap in the earlier work \cite{KL88}. However, this rank two result is much more general than what is needed here. The proof is quite long and uses several deep results along with unavoidable delicate case by case arguments. 

Fortunately, in our required rank two mod \(2\) application, the situation is greatly simplified as the lattice point free translate produced by Lemma~\ref{lem:residue-translate} is centered at a half lattice point. This allows us to use parity arguments to give an elementary proof of the required lattice width estimate. Using that, we will show that \(\Theta_{2,2}=2\).

\subsection{Lattice width in rank two}

\begin{lemma}\label{lem:half-lattice-width}
Let \(K\subset\R^2\) be a convex body centrally symmetric about a point \(z\in\frac12\Z^2\). Suppose \(\operatorname{int}(K)\cap\Z^2=\varnothing\). Then the lattice width of \(K\) is at most \(2\).
\end{lemma}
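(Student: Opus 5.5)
First we normalize. Since $K$ is a convex body, its center $z$ lies in $\operatorname{int}(K)$. So $z\notin\Z^2$, and $z$ represents one of the three nonzero classes of $\tfrac12\Z^2/\Z^2$. The group $GL_2(\Z)$ acts transitively on these three classes, and it preserves both the hypothesis and the lattice width. After an integral translation and a unimodular change of coordinates we may therefore assume $z=(\tfrac12,\tfrac12)$. Put $C=2(K-z)$. This is an origin symmetric convex body, and a point $p\in\Z^2$ lies in $\operatorname{int}(K)$ if and only if $2p-(1,1)\in\operatorname{int}(C)$. So the hypothesis says exactly that $\operatorname{int}(C)$ contains no vector of the odd class $(1,1)+2\Z^2$. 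The width of $K$ in a direction $u\in(\Z^2)^\ast$ is $h_{C}(u)$, because it equals $2h_{K-z}(u)$. It therefore suffices to find a nonzero integral functional $u$ with $|u|\le 2$ on $C$.

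Next we choose a good basis. In rank two the successive minima of $C$ with respect to $\Z^2$ are attained by a basis $e_1,e_2$ of $\Z^2$, with $\norm{e_i}_C=\lambda_i$ and $\lambda_1\le\lambda_2$. We may also take the basis reduced, so that $\norm{e_2\pm e_1}_C\ge\lambda_2$. The three nonzero residues mod $2$ are the classes of $e_1$, $e_2$ and $e_1+e_2$, so exactly one of these three vectors lies in the forbidden odd class, and every vector in that class has $C$-norm at least $1$. Let $u$ be the integral functional with $u(e_1)=0$ and $u(e_2)=1$. The goal is to show $|t|\le 2$ for every $x=se_1+te_2\in C$, splitting into cases according to which of $e_1,e_2,e_1+e_2$ is forbidden.

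The argument in each case is a parity and convexity argument. Suppose some $x=se_1+te_2\in\operatorname{int}(C)$ has $|t|>2$. By symmetry $C$ also contains $-x$ and $\pm e_1/\lambda_1$, together with their convex hull, which is a parallelogram of area comparable to $|t|/\lambda_1$. We look for a lattice point of the forbidden class inside this hull. The forbidden class is $\{ae_1+be_2\}$ with $(a,b)$ having a prescribed parity pattern, namely $(1,0)$, $(0,1)$ or $(1,1)$ mod $2$. A horizontal line $u=b$ with $|b|\le 2$ of the correct parity meets the hull in a segment. If $\lambda_1\le 1$ this segment has length at least of order $2/\lambda_1\cdot(1-|b|/|t|)$, which is at least $2$. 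Such a segment contains an integer $a$ of either parity, so the hull contains a forbidden point in $\operatorname{int}(C)$, a contradiction. If instead $\lambda_1>1$, then $e_1$ itself is not forbidden only if $e_2$ or $e_1+e_2$ is. In that situation we should use $u$ with $u(e_1)=1$ and $u(e_2)=0$ (or $u(e_1)=1$ and $u(e_2)=-1$) and run the symmetric argument, using $\lambda_2\ge\lambda_1>1$ together with Minkowski's second theorem, $\lambda_1\lambda_2\Vol(C)\le 4$, to control how far $C$ can extend in the chosen direction.

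The main obstacle is this last case analysis. Getting the constant exactly $2$, rather than $2+\varepsilon$, requires choosing, for each of the three forbidden classes, the right integral direction among $u$, its dual partner and their sum or difference, and checking the boundary situations where the relevant segment has length exactly $2$. This is the equality situation of the $\ell^1$ ball. I would first treat the case where $e_1+e_2$ is forbidden, which is the extremal $\ell^1$-type configuration, and then reduce the other two cases to it by the substitution $e_2\mapsto e_2+e_1$. This substitution permutes the roles of the classes while preserving reducedness up to a sign.
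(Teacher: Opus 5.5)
Your normalization is correct: with $C=2(K-z)$, the hypothesis says $\operatorname{int}(C)$ misses $(1,1)+2\Z^2$, and you need a nonzero integral $u$ with $h_C(u)\le 2$. Your candidate, the primitive $u$ vanishing on a shortest vector $e_1$, is in fact the right one in every case, so switching direction when $\lambda_1>1$ is unnecessary.

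The gap is the step that is supposed to prove $|t|\le 2$. The slice estimate is wrong. On the row $u=1$, the hull of $\pm x,\pm e_1/\lambda_1$ has a slice of length $\frac{2}{\lambda_1}\bigl(1-\frac{1}{|t|}\bigr)$. For $\lambda_1$ near $1$ and $|t|$ slightly above $2$ this is about $1$, not $2$, so the slice need not contain an integer of the prescribed parity.

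More fundamentally, no argument using only the four points $\pm x,\pm e_1/\lambda_1$ can succeed. In the coordinates of $e_1,e_2$, the body $\operatorname{conv}\{\pm(1,0),\pm(0,2+\varepsilon)\}$ has $\norm{e_1}=1$, reaches height $2+\varepsilon$, and has no point with both coordinates odd in its interior. It is ruled out only because there $e_2$ is shorter than $e_1$. So the minimality of $e_1$ must enter quantitatively, and your sketch never uses it. Minkowski's second theorem gives only $h_C(u)\le 2/\lambda_2$, which settles the case $\lambda_2\ge 1$ and nothing more. Also, $e_2\mapsto e_2+e_1$ does not in general preserve reducedness. Since your last paragraph defers exactly this case analysis, the lemma is not proved as written.

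The missing idea is to apply minimality on the row $u=1$ itself, which pins down where the slice sits. Let $x=s_0e_1+he_2\in C$ with $h=h_C(u)$. Put $r=1/\lambda_1$, $\theta=s_0/h$ and $P=\operatorname{conv}\{\pm re_1,\pm x\}\subset C$. Then $\norm{ae_1+e_2}_C\le\norm{ae_1+e_2}_P=|a-\theta|/r+1/h$. Let $d=\operatorname{dist}(\theta,\Z)$; minimality of $e_1$ at the nearest lattice point of that row gives $d\ge 1-r/h$.
\begin{itemize}
\item If $e_1$ is forbidden, then $r\le 1$, and $d\le\frac12$ gives $h\le 2r\le 2$.
\item Otherwise, the forbidden points on the row are $ae_1+e_2$ with $a$ in a coset of $2\Z$. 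One of them lies within $1-d$ of $\theta$, so $(1-d)/r+1/h\ge 1$. Adding this to $d\ge 1-r/h$ gives $r(1-2/h)\le 0$, i.e.\ $h\le 2$.
\end{itemize}

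Completed this way, your route is genuinely different from the paper's. The paper enlarges $K$ to a maximal lattice-free body, shows by parity that it is a parallelogram, uses Pick's formula to inscribe a fundamental square, and then computes the two coordinate widths explicitly. Your version needs only a shortest vector and one row of lattice points.
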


\begin{proof}
It is convenient to split the argument in five steps. 

\emph{Step 1: K is a bounded maximal lattice point free set.}
Since \(K\) is lattice point free, so \(z\notin\Z^2\). Thus \(z\) represents a nonzero element of \(\frac12\Z^2/\Z^2\). After a unimodular affine map\footnote{This means that the map preserves the lattice $\Z^2$. Concretely consider $x \to Ax + b$, for $A \in GL(2;Z)$ with $\det(A)= \pm 1$ and $b \in \Z^2$}, we may assume \(z=(1/2,1/2)\). This does not change lattice width.

Enlarge \(K\), keeping the same center \(z\), to a maximal centrally symmetric, interior lattice point free convex set that that we still call $K$. Since enlarging cannot decrease lattice width, it is enough to prove the estimate for such a maximal enlargement. If this maximal set is unbounded, then by central symmetry it is a strip. A maximal lattice point free strip lies between two adjacent parallel lattice lines, and therefore has lattice width \(1\). Hence we may assume from now on that the \(K\) is bounded.

\emph{Step 2: $K$ is a centrally symmetric parallelogram.}

We first note that all boundary segments of $K$ are straight line segments because any curved boundary portions can be pushed slightly outwards without creating new lattice points, contradicting maximality. Next observe that every side of $\partial K$ must contain a lattice point in its relative interior; otherwise the side, and by symmetry the opposite side, could be moved slightly outward without creating an interior lattice point, contradicting maximality. 

Hence, $K$ is a polygon and every side contains a lattice point in its relative interior. Since $K$ is bounded, $\partial K$ contains only finitely many lattice points, and hence it has only finitely many sides. Note that by central symmetry there must be an even number of sides. Choose one lattice point in the relative interior of each side. If the polygon had at least six sides, then among these chosen lattice points two would have the same parity mod \(2\). But these are non-opposite sides as those have different parity mod $2$ since $2z=(1,1)$. But for non-opposite sides with same parity, the midpoint of line joining them would be an interior lattice point for $K$, contradiction. Therefore \(K\) has at most four sides. Since \(K\) is bounded and centrally symmetric, it must have exactly four sides. Thus \(K\) is a parallelogram.

\emph{Step 3: An inscribed fundamental parallelogram.}
Let \(p_1\) and \(p_2\) be lattice points in the relative interiors of two adjacent sides of \(K\). Since \(K\) is centrally symmetric about \(z=(1/2,1/2)\), the points \(p_3=2z-p_1,\quad p_4=2z-p_2\) are lattice points in the relative interiors of the opposite sides. Let $P$ be the lattice parallelogram contained in \(K\) with lattice point vertices $p_i$. The interior of the edges of \(P\) contain no lattice points as they lie in interior of $K$. Similarly $P$ also has no interior lattice point. Therefore by Pick's formula, the area of $P$ is
\[
\operatorname{area}(P)=0+\frac{4}{2}-1=1.
\]
Hence \(P\) is a fundamental parallelogram for the lattice. After a unimodular change of coordinates, we may assume that \(P=[0,1]^2,\) centered at \((1/2,1/2)\).

\emph{Step 4: Describing $K$ as a set.}
We now just need to prove the width estimate for a parallelogram \(K\) centered at \((1/2,1/2)\), circumscribed around the unit square, with one vertex of the square on each side of \(K\). Shift the center to the origin. The square $P$ now has vertices $-v,v,w,-w$ where $v=(1/2,1/2)$ and $w=(1/2,-1/2)$. 

As $K$ circumscribes $P$, the two pairs of opposite sides of \(K\) pass through \(\pm v\) and \(\pm w\). Use $(v,w)$ coordinates to write points as \(x=sv+tw\). Then the vertices of the square are \((\pm1,0)\) and \((0,\pm1)\). Since one pair of sides of \(K\) passes through \((1,0)\) and \((-1,0)\), that pair is given, after normalization, by \(|s+at|\le1\) for some real number \(a\). Since the other pair of sides passes through \((0,1)\) and \((0,-1)\), it is given by \(|bs+t|\le1\) for some real number \(b\). The fact that \(K\) contains all four vertices of the square implies \(|a|\le1\) and \(|b|\le1.\) Thus we can write $K$ as the following set \[K = \{(s,t): |s+at| \leq 1, |bs+t| \leq 1; \quad a,b \in [-1,1].\]

\emph{Step 5: Width calculation.}
Set \(D=1-ab\). Since \(K\) is bounded and \(|a|,|b|\le1\), we have \(D>0\). Let \(p=s+at\) and \(q=bs+t\). Thus \(K\) is given by \(|p|\le1\) and \(|q|\le1\).  Solving for \(s,t\) gives \(s=(p-aq)/D\) and \(t=(-bp+q)/D\).

Now return to the original lattice coordinates. Since \(v=(1/2,1/2)\) and \(w=(1/2,-1/2)\), a point \(x=sv+tw\) has coordinates \(X=(s+t)/2\) and \(Y=(s-t)/2\). Substituting the formulas for \(s\) and \(t\), we get
\[
X=\frac{(1-b)p+(1-a)q}{2D},
\qquad
Y=\frac{(1+b)p-(1+a)q}{2D}.
\]
As \(p\) and \(q\) vary independently in \([-1,1]\), and since \(|a|,|b|\le1\), the width of \(K\) in the \(X\)-direction is \(((1-b)+(1-a))/D=(2-a-b)/D\). Similarly, the width of \(K\) in the \(Y\)-direction is \(((1+b)+(1+a))/D=(2+a+b)/D\).

Let \(S=a+b\). The smaller of these two widths is \((2-|S|)/D\). We claim this is at most \(2\). Since \(D=1-ab\), this is equivalent to \(2-|a+b|\le2(1-ab)\), or equivalently \(|a+b|\ge2ab\).

This is trivially true if \(ab\le0\). If \(ab>0\), then \(a\) and \(b\) have the same sign, so \(|a+b|=|a|+|b|\). Since \(|a|,|b|\le1\), we have
\[
|a|+|b|-2|a||b|
=
|a|(1-|b|)+|b|(1-|a|)
\ge0.
\]
Thus \(|a+b|\ge2|a||b|=2ab\).

Therefore either the \(X\)-width or the \(Y\)-width of \(K\) is at most \(2\). Since \(X\) and \(Y\) are the two lattice coordinates, the lattice width of \(K\) is at most \(2\).
\end{proof}

This gives us the following useful corollary.

\begin{corollary}\label{cor:width-two-mod-two-covering}
Let \(\Lambda\) be a rank two lattice and let \(K\) be a centrally symmetric convex body centered at the origin with \(w(K,\Lambda)\ge2\), then every class in \(\Lambda/2\Lambda\) has a representative in \(2K\). Equivalently, \(\mu_2(K,\Lambda)\le2\).
\end{corollary}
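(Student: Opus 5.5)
We argue by contradiction, combining Corollary \ref{cor:free} with Lemma \ref{lem:half-lattice-width}. After a linear change of coordinates sending \(\Lambda\) to \(\Z^2\) (which changes neither lattice width nor membership of residue classes), assume \(\Lambda=\Z^2\). Suppose some class \(a+2\Lambda\), with \(a\in\Lambda\), has no representative in \(2K\). We will produce a lattice point free body that is too wide.

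By Corollary \ref{cor:free} with \(n=2\) and \(r=2\), the translate \(L=-a/2+K\) contains no point of \(\Lambda\). Here \(K\) is centrally symmetric about the origin, so \(L\) is centrally symmetric about \(z=-a/2\in\frac12\Z^2\). In particular \(\operatorname{int}(L)\cap\Z^2=\varnothing\). Lemma \ref{lem:half-lattice-width} then gives \(w(L,\Lambda)\le 2\), and since translation does not change lattice width, \(w(K,\Lambda)\le 2\). This is not yet a contradiction, because the hypothesis only gives \(w(K,\Lambda)\ge 2\). The main obstacle is therefore the boundary case of equality.

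To close this gap, we dilate slightly. Because \(2K\) is compact and the class \(a+2\Lambda\) is discrete and misses \(2K\), its distance from \(2K\) is positive. Hence for some \(\varepsilon>0\) the class also misses \(2(1+\varepsilon)K\). Repeating the argument with \(r=2(1+\varepsilon)\), the body \(-a/2+(1+\varepsilon)K\) is lattice point free and centrally symmetric about a half lattice point. Its lattice width is \((1+\varepsilon)w(K,\Lambda)\ge 2(1+\varepsilon)>2\), which contradicts Lemma \ref{lem:half-lattice-width}.

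So every class of \(\Lambda/2\Lambda\) meets \(2K\). In terms of Definition \ref{def:modn-rad}, since \(\norm{\cdot}_K\) has unit ball \(K\), each class has a representative of norm at most \(2\), and therefore \(\mu_2(K,\Lambda)\le 2\). Conversely, \(\mu_2(K,\Lambda)\le 2\) means each class has a representative in \(2K\), since the minimum in the definition is attained. This gives the stated equivalence.
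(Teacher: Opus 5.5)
Your proof is correct and follows essentially the same route as the paper's: argue by contradiction, use compactness of $2K$ against the discrete class $a+2\Lambda$ to enlarge to $2(1+\varepsilon)K$, then apply Corollary~\ref{cor:free} and Lemma~\ref{lem:half-lattice-width} to the translate $-a/2+(1+\varepsilon)K$, whose lattice width exceeds $2$. Your explicit reduction to $\Lambda=\Z^2$ and your remark on the ``equivalently'' clause are small additions that the paper leaves implicit.
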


\begin{proof}
Fix a class \(a+2\Lambda\). Suppose, for contradiction, that \((a+2\Lambda)\cap2K=\varnothing\). Since \(2K\) is compact and \(a+2\Lambda\) is closed and discrete, there exists \(\varepsilon>0\) such that \((a+2\Lambda)\cap2(1+\varepsilon)K=\varnothing\). By Corollary~\ref{cor:free}, the translate \(-a/2+(1+\varepsilon)K\) contains no point of \(\Lambda\). This translate is centrally symmetric about the half lattice point \(-a/2\), and its lattice width is \((1+\varepsilon)w(K,\Lambda)\ge2(1+\varepsilon)>2\). This contradicts Lemma~\ref{lem:half-lattice-width}. Hence every class in \(\Lambda/2\Lambda\) has a representative in \(2K\).
\end{proof}

We can now compute the rank two mod \(2\) covering constant.

\begin{proposition}\label{prop:theta-22}
\(\Theta_{2,2}=2\).
\end{proposition}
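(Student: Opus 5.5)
The proof splits into an upper bound \(\Theta_{2,2}\le 2\) and a matching example.

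\emph{Upper bound.} Fix a rank two normed lattice \((V,\Lambda,K)\). By Remark \ref{rem:normalise} we may rescale \(K\) so that \(\lambda_1(K,\Lambda)=1\); it then suffices to show \(\mu_2(K^\circ,\Lambda^\ast)\le 2\). Identify \((\Lambda^\ast)^\ast\) with \(\Lambda\). Lemma \ref{lem:polar-width} gives
\[
w(K^\circ,\Lambda^\ast)=2\lambda_1(K,\Lambda)=2 .
\]
Now apply Corollary \ref{cor:width-two-mod-two-covering} to the rank two lattice \(\Lambda^\ast\subset V^\ast\) and the origin symmetric convex body \(K^\circ\). Its width hypothesis \(w\ge 2\) holds, so every class in \(\Lambda^\ast/2\Lambda^\ast\) has a representative in \(2K^\circ\). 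That is, \(\mu_2(K^\circ,\Lambda^\ast)\le 2\), and taking the supremum over all rank two normed lattices gives \(\Theta_{2,2}\le 2\).

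Two small points need checking here. First, the width in Corollary \ref{cor:width-two-mod-two-covering} is taken with respect to the dual of the ambient lattice, which for \(\Lambda^\ast\) is \(\Lambda\); this is exactly the width computed in Lemma \ref{lem:polar-width}. Second, \(K^\circ\) is a genuine convex body: it is compact because \(K\) contains a neighbourhood of the origin, and it has nonempty interior because \(K\) is bounded.

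\emph{Lower bound.} Take \(V=\R^2\), \(\Lambda=\Z^2\), and \(K=B_1^\infty\), the \(\ell^\infty\)-unit ball. Then \(\lambda_1(K,\Z^2)=1\), and under the standard pairing \(K^\circ=B_1^1\). By the example following Definition \ref{def:modn-rad} with \(p=1\), the class \((1,1)\bmod 2\) has minimal \(\ell^1\)-norm representative of norm \(2\), so \(\mu_2(B_1^1,\Z^2)=2\). Hence the product \(\lambda_1\mu_2\) equals \(2\) for this lattice, and \(\Theta_{2,2}\ge 2\).

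All the real difficulty sits in Lemma \ref{lem:half-lattice-width}, which is assumed here. The only delicate remaining step is making sure the duality bookkeeping in the upper bound is correct: the width condition must be applied to \(K^\circ\) relative to \(\Lambda^\ast\), not to \(K\) relative to \(\Lambda\).
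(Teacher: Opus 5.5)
Your proposal is correct and is the paper's proof: normalise $\lambda_1(K,\Lambda)=1$, use Lemma~\ref{lem:polar-width} to get $w(K^\circ,\Lambda^\ast)=2$, apply Corollary~\ref{cor:width-two-mod-two-covering} to $(\Lambda^\ast,K^\circ)$, and take $K=[-1,1]^2$ with the class $(1,1)\bmod 2$ for the matching lower bound. Your extra checks are right and harmless: $(\Lambda^\ast)^\ast=\Lambda$ is the correct lattice for the width, and $K^\circ$ is a genuine convex body.
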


\begin{proof}
We first prove the upper bound. Let \((V,\Lambda,K)\) be a rank two space. By  remark \ref{rem:normalise} we may assume \(\lambda_1(K,\Lambda)=1\). By Lemma~\ref{lem:polar-width}, \(w(K^\circ,\Lambda^\ast)=2\). Applying Corollary~\ref{cor:width-two-mod-two-covering} to the rank two lattice \(\Lambda^\ast\) and the origin symmetric convex body \(K^\circ\), every class in \(\Lambda^\ast/2\Lambda^\ast\) has a representative in \(2K^\circ\). Therefore \(\mu_2(K^\circ,\Lambda^\ast)\le2\). Taking the supremum over all rank two normed lattices gives \(\Theta_{2,2}\le2\), since we had normalized \(\lambda_1(K,\Lambda)=1\).

For the lower bound, take \(V=\R^2\), \(\Lambda=\Z^2\), and \(K=[-1,1]^2\). Then \(\lambda_1(K,\Lambda)=1\), and the polar body is \(K^\circ=\{(u,v): |u|+|v|\le1\}\). Any representative of the class \((1,1)+2\Z^2\) has \(K^\circ\)-norm at least \(2\). Thus \(\mu_2(K^\circ,\Lambda^\ast)\ge2\), so \(\Theta_{2,2}\ge2\). Combining the two inequalities gives \(\Theta_{2,2}=2\).
\end{proof}

\section{Stable systole and short lifts}
In this section we will recall some basic notions and definitions from \cite{Federer}. Throughout, \((M,g)\) will be a closed Riemannian manifold. If \(T\) is a $k$-cycle, we will write \(\mathbf M(T)\) for its mass with respect to \(g\). 

\subsection{Stable norms and duality}
Let \(a\in H_k(M;\Z)/\mathrm{tors}\). For each positive integer \(r\), set
\[
m_r(a)=\inf\{\mathbf M(T): T \text{ is an integral } k\text{-cycle representing }ra\}.
\]
The sequence \(m_r(a)\) is subadditive, so the limit under stabilisation exists and gives the stable norm on integral classes, i.e.,
\[
\norm{a}_{\mathrm{st}}=\lim_{r\to\infty}\frac{m_r(a)}{r}.
\]
The stable norm extends by homogeneity to rational classes and then by continuity to a norm on \(H_k(M;\R)\). Hence, $(H_k(M;\R), \norm{\cdot}_{\mathrm{st}})$ is a finite dimensional normed vector space, in particular it is a Banach space.
Now, let \(\omega\) be a smooth \(k\)-form on \(M\). Its comass is
\[
\norm{\omega}_{\infty}
=
\sup_{x\in M} \sup_{\xi}
 \{|\omega_x(\xi)|:\xi\ \text{ is a unit simple $k$-vector in } \wedge^kT_xM \}.
\]
For a cohomology class \(u\in H^k(M;\R)\), its comass norm is
\[
\norm{u}^{\ast}
=
\inf\{\norm{\omega}_{\infty}: \omega \text{ is a closed smooth } k\text{-form representing }u\}.
\]
By Federer duality \cite{Federer}, the Banach spaces $(H_k(M;\R), \norm{\cdot}_{\mathrm{st}})$ and $(H^k(M;\R), \norm{\cdot}^{*})$ are dual to each other. Further, modulo torsion, the integral homology and cohomology are lattices inside $(H_k(M;\R), \norm{\cdot}_{\mathrm{st}})$ and $(H^k(M;\R), \norm{\cdot}^{*})$ respectively and are also dual to each other.  We shall denote them by $\Lambda_k(M)$ and $\Lambda^k(M)$, respectively, i.e., we have
\[
\Lambda_k(M)=H_k(M;\Z)/\tors \quad\text{dual to} \quad \Lambda^k(M)=  H^k(M;\Z)/\tors.
\]

\subsection{Short lifts from stable systoles}

\begin{definition}[Stable \(k\)-systole]
The stable \(k\)-systole of \((M,g)\) is the first minimum(Definition \ref{def:first-minimum}) of the lattice $\Lambda_k(M)$ with respect to the stable norm, i.e.,
\[
\stsys_k(M,g)
= \lambda_1(\Lambda_k(M))=
\min\{\norm{a}_{\mathrm{st}}:0\neq a\in\Lambda_k(M)\}.
\]
\end{definition}

The following proposition will be used several times. It is the mechanism that gives controls the length of integral lifts of a mod $n$ class in terms of the covering constant and the stable systole. It is a direct consequence of applying Lemma~\ref{lem:theta-unwound} to residue classes in \(\Lambda^k(M)/n\Lambda^k(M)\).

\begin{proposition}[Short lifts of mod \(n\) classes]\label{lem:basic-lift}
Let \((M,g)\) be a closed Riemannian manifold with \(b=\rank H^k(M;\Z)>0\). For every class \(\rho\in\Lambda^k(M)/n\Lambda^k(M)\), there exists an integral lift \(c\in\Lambda^k(M)\) with \(c\equiv\rho\pmod n\) and
\[
\norm{c}^{\ast}
\le
\frac{\Theta_{b,n}}{\stsys_k(M,g)}.
\]
\end{proposition}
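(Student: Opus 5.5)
The plan is to apply Lemma~\ref{lem:theta-unwound} directly to the normed lattice coming from Federer duality. I will set $V=H_k(M;\R)$ with $\Lambda=\Lambda_k(M)$ and let $K$ be the unit ball of the stable norm $\norm{\cdot}_{\mathrm{st}}$. This is a centrally symmetric convex body, because the stable norm is a genuine norm on the finite dimensional space $H_k(M;\R)$. The rank of this lattice is $\rank H_k(M;\Z)$, which equals $\rank H^k(M;\Z)=b$ by universal coefficients. So $(V,\Lambda,K)$ is a rank $b$ normed lattice of the kind over which the supremum in Definition~\ref{def:theta} is taken. By the definition of the stable $k$-systole, $\lambda_1(K,\Lambda)=\stsys_k(M,g)$.

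Next I identify the dual data. By Federer duality, $V^\ast=H^k(M;\R)$, and the dual norm of $\norm{\cdot}_{\mathrm{st}}$ is the comass norm $\norm{\cdot}^\ast$. The dual norm has unit ball $K^\circ$ by Definition~\ref{def:polar}, so $\norm{\alpha}_{K^\circ}=\norm{\alpha}^\ast$ for every $\alpha\in H^k(M;\R)$. The dual lattice $\Lambda^\ast$ is $\Lambda^k(M)=H^k(M;\Z)/\tors$. This holds because the Kronecker pairing $H^k(M;\Z)/\tors\times H_k(M;\Z)/\tors\to\Z$ is perfect, by universal coefficients, since the Ext term is torsion. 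The one point to state carefully is that these two facts are compatible: the lattice pairing is the restriction of the real pairing that underlies Federer duality. I expect this compatibility to be the only place where any care is needed, and it is already asserted in the preceding subsection.

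With these identifications, Lemma~\ref{lem:theta-unwound} applied to a class $\rho\in\Lambda^\ast/n\Lambda^\ast=\Lambda^k(M)/n\Lambda^k(M)$ gives $c\in\Lambda^k(M)$ with $c\equiv\rho\pmod n$ and
\[
\norm{c}^\ast=\norm{c}_{K^\circ}\le\frac{\Theta_{b,n}}{\lambda_1(K,\Lambda)}=\frac{\Theta_{b,n}}{\stsys_k(M,g)}.
\]
This is the claimed bound.

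Two small remarks finish the argument. First, $\stsys_k(M,g)>0$ because it is the first minimum of a lattice with respect to a norm, so the right-hand side is finite. Second, the statement concerns classes modulo torsion, so a lift $c$ in $\Lambda^k(M)$ can be represented by any integral class in $H^k(M;\Z)$ mapping to it, and the comass is unaffected.
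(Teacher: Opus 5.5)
Your proposal is correct and matches the paper's proof: both apply Lemma~\ref{lem:theta-unwound} to $(H_k(M;\R),\Lambda_k(M),B_1^{\mathrm{st}})$, using Federer duality to identify the polar of the stable unit ball with the comass unit ball and $\lambda_1$ with $\stsys_k(M,g)$. Your extra checks (equality of ranks, and the universal-coefficients identification of $\Lambda^k(M)$ with the dual lattice $\Lambda^\ast$) only make explicit what the paper takes from the preceding subsection.
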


\begin{proof}
Consider $(H_k(M;\R), \Lambda_k(M), B_1^{\mathrm st})$; here $B_1^{\mathrm{st}}$ is the unit ball in the stable norm. By Federer duality, the unit comass ball is the polar body of the unit stable norm ball. The result now follows from Lemma~\ref{lem:theta-unwound}, as $\lambda_1(B_1^{\mathrm{st}}, \Lambda_k(M)) = \stsys_k(M,g)$.
\end{proof}

\section{Curvature free applications}
\label{sec:curvature-free}

In this section we prove the curvature free applications stated in the introduction,i.e., we prove Theorem \ref{thmA} and Theorem \ref{thmB}.

\subsection{Products of two spheres}

We will use the following standard Wirtinger inequality for powers of two-forms. Here, for a form on a Euclidean vector space, \(\norm{\cdot}_\infty\) denotes the comass norm.

\begin{lemma}\label{lem:wirt-two}
On a Euclidean vector space of dimension \(2m\), every \(2\)-form \(\alpha\) satisfies the sharp inequality
\[
|\alpha^m|
\le
m!\norm{\alpha}_{\infty}^m.
\]
\end{lemma}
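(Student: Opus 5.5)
We reduce the inequality to a normal form for $2$-forms and then to the elementary symmetric mean inequality. Fix an orthonormal basis of the Euclidean space $E$ of dimension $2m$. By the standard normal form for skew-symmetric matrices (spectral theorem for the real skew operator associated with $\alpha$), there is an orthonormal basis $e_1,f_1,\dots,e_m,f_m$ of $E$, with dual basis $e^1,f^1,\dots,e^m,f^m$, such that
\[
\alpha=\sum_{i=1}^m \lambda_i\, e^i\wedge f^i,\qquad \lambda_i\ge 0 .
\]
We may take all $\lambda_i\ge0$ by flipping $f_i\mapsto -f_i$, which changes only the orientation sign. That sign does not affect $|\alpha^m|$.

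\emph{Computing both sides.} In the normal form,
\[
\alpha^m=m!\,\lambda_1\cdots\lambda_m\, e^1\wedge f^1\wedge\cdots\wedge e^m\wedge f^m .
\]
The top form has norm $1$ in the orthonormal coframe, so $|\alpha^m|=m!\prod_i\lambda_i$. For the comass, the classical Wirtinger-type fact (as in Federer) is $\norm{\alpha}_\infty=\max_i\lambda_i$. The lower bound is immediate by evaluating $\alpha$ on the unit simple $2$-vector $e_i\wedge f_i$. For the upper bound, take a unit simple $2$-vector $\xi=u\wedge w$ with $u,w$ orthonormal. Writing $u=\sum(a_ie_i+b_if_i)$ and $w=\sum(c_ie_i+d_if_i)$, we get
\[
\alpha(\xi)=\sum_i\lambda_i(a_id_i-b_ic_i).
\]
Then
\[
|\alpha(\xi)|\le \max_i\lambda_i\sum_i|a_id_i-b_ic_i|\le \max_i\lambda_i\sum_i \tfrac12\bigl(a_i^2+b_i^2+c_i^2+d_i^2\bigr)=\max_i\lambda_i,
\]
using Cauchy--Schwarz in each $2$-plane and $|u|=|w|=1$. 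This upper bound on the comass is the only step that needs care, and it is elementary.

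\emph{Conclusion and sharpness.} Combining the two computations gives
\[
|\alpha^m|=m!\prod_i\lambda_i\le m!\,(\max_i\lambda_i)^m=m!\norm{\alpha}_\infty^m .
\]
Equality holds exactly when all $\lambda_i$ are equal, for instance for the Kähler form $\sum_i e^i\wedge f^i$. This shows the constant $m!$ is sharp.
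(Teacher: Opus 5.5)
Your proof is correct and follows the paper's argument: skew normal form, $\alpha^m=m!\,\lambda_1\cdots\lambda_m\,dV$, and $\norm{\alpha}_\infty=\max_i|\lambda_i|$, with the additional merit that you actually prove the comass formula, which the paper simply asserts. One precision: the inequality itself only uses the trivial direction $\max_i\lambda_i\le\norm{\alpha}_\infty$, whereas your Cauchy--Schwarz bound (the nontrivial direction, not the inequality as you suggest) is what is needed for sharpness.
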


\begin{proof}
Write \(\alpha\) in normal form \(\alpha
=
\lambda_1 e^1\wedge e^2
+
\lambda_2 e^3\wedge e^4
+
\cdots+
\lambda_m e^{2m-1}\wedge e^{2m}.\)
Then \(\norm{\alpha}_{\infty}=\max_i|\lambda_i|\), and
\[
\alpha^m=m!\lambda_1\cdots\lambda_m\,dV.
\]
The inequality follows. Equality is attained when all \(|\lambda_i|\) are equal.
\end{proof}
We will also need the following simple observation. This will also be used in section 6. 

\begin{lemma}\label{lem:all-odd-class}
Let \(M=(S^2)^m\), and let \(x_1,\ldots,x_m\in H^2(M;\Z)\) be the standard generators. Then for the all odd mod $2$ residue class $\rho$, every integral lift $c$ satisfies \(c^m \neq 0 \) and \( \left|\langle c^m,[M]\rangle\right|\ge m!.\)
\end{lemma}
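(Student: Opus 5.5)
The plan is a direct computation in the cohomology ring of \((S^2)^m\), using the Künneth formula. First record that \(H^2(M;\Z)\) is free with basis \(x_1,\ldots,x_m\), where \(x_i\) is pulled back from the generator of \(H^2\) of the \(i\)-th factor. The ring relations are \(x_i^2=0\) and \(x_i x_j=x_j x_i\); the latter holds because even-degree classes commute. Finally, \(\langle x_1x_2\cdots x_m,[M]\rangle=\pm1\), and with the product orientation this is \(1\). Since \(H^2(M;\Z)\) has no torsion, \(\Lambda^2(M)=H^2(M;\Z)\). The all-odd residue class \(\rho\) is the class of \(x_1+\cdots+x_m\) in \(\Lambda^2(M)/2\Lambda^2(M)\). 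An integral lift \(c\) of \(\rho\) is therefore exactly an element \(c=\sum_i a_i x_i\) with every \(a_i\in\Z\) odd.

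Next, expand \(c^m\) by the multinomial theorem. The relations \(x_i^2=0\) kill every monomial in which some \(x_i\) appears more than once. The only surviving monomials are the ones using each \(x_i\) exactly once. Because the \(x_i\) commute, each of the \(m!\) orderings of the factors contributes the same monomial \(x_1\cdots x_m\). This gives
\[
c^m=m!\,a_1a_2\cdots a_m\,x_1x_2\cdots x_m .
\]
Pairing with \([M]\) yields \(\langle c^m,[M]\rangle=m!\,a_1\cdots a_m\).

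Finally, each \(a_i\) is odd, so \(a_i\neq0\) and hence \(|a_i|\ge1\) as an integer. Therefore \(|a_1\cdots a_m|\ge1\), which gives \(|\langle c^m,[M]\rangle|\ge m!\). In particular \(\langle c^m,[M]\rangle\neq0\), so \(c^m\neq0\).

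There is no real obstacle here. The only point needing care is justifying the ring structure: the vanishing \(x_i^2=0\) and the normalization \(\langle x_1\cdots x_m,[M]\rangle=\pm1\). Both follow from the Künneth isomorphism applied to the \(m\)-fold product of \(S^2\) and from \(H^4(S^2)=0\). The sign convention is irrelevant because only the absolute value enters the statement.
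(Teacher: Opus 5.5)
Your proof is correct and follows the paper's argument exactly: every lift of the all-odd class is $c=\sum a_i x_i$ with each $a_i$ odd, and the relations $x_i^2=0$ together with $x_1\cdots x_m$ generating $H^{2m}(M;\Z)$ give $c^m=m!\,a_1\cdots a_m\,x_1\cdots x_m$, whence $|\langle c^m,[M]\rangle|\ge m!$. Your K\"unneth justification of the ring structure spells out what the paper takes as standard.
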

\begin{proof}
Take $\rho = x_1 + \cdots + x_m\pmod2$. Then any lift $c$ is given by $ a_1x_1 + \cdots+a_mx_m$ where all $a_i$ are odd, in particular $|a_1 \cdots a_m| \geq 1$. Further, since \(x_i^2=0\) and \(x_1\cdots x_m\) generates \(H^{2m}(M;\Z)\), the conclusion follows as $c^m = m! a_1\cdots a_mx_1\cdots x_m$.
\end{proof}

We now prove Theorem \ref{thmA}.

\begin{theorem}\label{prop:product-spheres-no-curv}
For every metric on \((S^2)^m\),
\[
\stsys_2((S^2)^m,g)
\le
\Theta_{m,2}\big[\Vol_g((S^2)^m)\big]^{\frac{1}{m}}.
\]
In particular,
\[
\stsys_2(S^2\times S^2,g)
\le
2\sqrt{\Vol_g(S^2\times S^2)}.
\]
\end{theorem}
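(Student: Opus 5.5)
The plan is to mimic Gromov's argument for $\CP^m$, using a single short class whose top power is nonzero. Let $M=(S^2)^m$, so that $b=\rank H^2(M;\Z)=m$. Take $\rho$ to be the all odd class $x_1+\cdots+x_m \pmod 2$ in $\Lambda^2(M)/2\Lambda^2(M)$. Proposition~\ref{lem:basic-lift}, applied with $k=2$ and $n=2$, gives an integral lift $c\equiv\rho\pmod 2$ with
\[
\norm{c}^{\ast}\le\frac{\Theta_{m,2}}{\stsys_2(M,g)}.
\]
Since the comass norm on cohomology is an infimum, for every $\varepsilon>0$ I will pick a closed smooth $2$-form $\omega$ representing $c$ with $\norm{\omega}_\infty\le \Theta_{m,2}/\stsys_2+\varepsilon$.

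Next I pair the top power with the fundamental class. By Lemma~\ref{lem:all-odd-class},
\[
m!\le\left|\langle c^m,[M]\rangle\right|=\left|\int_M\omega^m\right|.
\]
Lemma~\ref{lem:wirt-two}, applied pointwise to $\omega_x$ on the Euclidean space $T_xM$ with metric $g_x$, gives $|\omega^m|\le m!\norm{\omega}_\infty^m\,dV_g$. Integrating, we get
\[
m!\le m!\,\norm{\omega}_\infty^m\Vol_g(M)\le m!\left(\frac{\Theta_{m,2}}{\stsys_2}+\varepsilon\right)^m\Vol_g(M).
\]
Letting $\varepsilon\to0$ and taking $m$-th roots yields $\stsys_2\le\Theta_{m,2}\Vol_g^{1/m}$. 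For $m=2$, Proposition~\ref{prop:theta-22} gives $\Theta_{2,2}=2$, which is the stated special case.

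I expect no real obstacle, since all the ingredients are already in place. The only points needing care are routine. First, the pointwise comass $\norm{\omega_x}_\infty$ is bounded by the global sup, so Wirtinger applies uniformly. Second, the comass infimum may not be attained, which the $\varepsilon$ argument handles. Third, the de Rham pairing $\int_M\omega^m$ must agree with the integral pairing $\langle c^m,[M]\rangle$; this holds because $c$ is torsion free here.
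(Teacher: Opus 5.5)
Your proposal is correct and follows essentially the same route as the paper's proof. Like the paper, you lift the all-odd mod $2$ class via Proposition~\ref{lem:basic-lift}, choose an $\varepsilon$-almost-optimal closed form, bound $m!\le\left|\int_M\omega^m\right|$ by Lemma~\ref{lem:all-odd-class}, apply the pointwise Wirtinger inequality of Lemma~\ref{lem:wirt-two}, let $\varepsilon\to0$, and use $\Theta_{2,2}=2$ for $m=2$.
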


\begin{proof}
Take $\rho$ from Lemma \ref{lem:all-odd-class}. By Proposition~\ref{lem:basic-lift}, it has an integral lift \(c\in H^2((S^2)^m;\Z)\) such that
\[
\norm{c}^{\ast}
\le
\frac{\Theta_{m,2}}{\stsys_2((S^2)^m,g)}.
\]
But by Lemma \ref{lem:all-odd-class}, for we also have  \(c^m \neq 0 \) and \( \left|\langle c^m,[M]\rangle\right|\ge m!.\) Therefore choose a closed \(2\)-form \(\alpha\) representing \(c\) with
\[
\norm{\alpha}_\infty
\le
\frac{\Theta_{m,2}}{\stsys_2((S^2)^m,g)}+\varepsilon.
\]
Combining the above we get,
\[
m!
\le
\left|\int_{(S^2)^m}\alpha^m\right|
\le
m!\left(
\frac{\Theta_{m,2}}{\stsys_2((S^2)^m,g)}+\varepsilon
\right)^m
\Vol((S^2)^m,g).
\]
 Letting \(\varepsilon\to0\) and rearranging we get the first conclusion. For $m=2$, \(\Theta_{2,2}=2\) and therefore the second conclusion also follows. 
\end{proof}

\begin{remark}
The same proof with the hyperplane class gives Gromov's stable inequality for \(\CP^m\),
\[
\stsys_2(\CP^m,g)^m\le m!\Vol(\CP^m,g).
\]
We point this out only to emphasize that the mod \(n\) lifting viewpoint reduces to the classical rank one argument when \(b_2=1\). A new feature for \((S^2)^m\) is that the all odd mod \(2\) class has a top power with an algebraic factor \(m!\), which cancels the \(m!\) in the Wirtinger estimate.
\end{remark}

\subsection{Poincare duality estimate}

Let \(A_{k,d-k}\) be the smallest constant such that, on every Euclidean vector space of dimension \(d\), every \(k\)-form \(\alpha\) and every \((d-k)\)-form \(\beta\) satisfy
\[
|\alpha\wedge\beta|
\le
A_{k,d-k}
\norm{\alpha}_{\infty}
\norm{\beta}_{\infty}
\,dV.
\]

We will use the following well known observation. 
\begin{lemma}\label{lem:A22}
\(A_{2,2}=2\).
\end{lemma}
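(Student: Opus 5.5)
We need to show $A_{2,2}=2$ in dimension $4$. The plan is to prove the upper bound with an explicit normal form and the lower bound with a single example.

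\emph{Lower bound.} On $\R^4$ with orthonormal coframe $e^1,\dots,e^4$, take $\alpha=\beta=e^1\wedge e^2+e^3\wedge e^4$. The comass of $\alpha$ is $1$; this is the equality case of Lemma~\ref{lem:wirt-two}, since the normal form coefficients are $\lambda_1=\lambda_2=1$. On the other hand $\alpha\wedge\beta=\alpha^2=2\,dV$. Hence $A_{2,2}\ge 2$.

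\emph{Upper bound.} I would reduce to Lemma~\ref{lem:wirt-two} by polarization. The pairing $(\alpha,\beta)\mapsto \alpha\wedge\beta/dV$ is a symmetric bilinear form on $\wedge^2$, so
\[
\alpha\wedge\beta=\tfrac14\big((\alpha+\beta)^2-(\alpha-\beta)^2\big).
\]
Normalize $\norm{\alpha}_\infty=\norm{\beta}_\infty=1$; this is possible by homogeneity, and the cases $\alpha=0$ or $\beta=0$ are trivial.

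Polarization alone gives only the crude bound $\tfrac14(2\cdot 4+2\cdot 4)=4$, using $|\gamma^2|\le 2\norm{\gamma}_\infty^2$ and $\norm{\alpha\pm\beta}_\infty\le 2$. That is off by a factor $2$. The reason is that $(\alpha+\beta)^2$ and $(\alpha-\beta)^2$ cannot both be extremal with opposite signs, so the real work is to exploit this.

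To do so, I would use the self-dual/anti-self-dual splitting $\wedge^2=\wedge^+\oplus\wedge^-$. With the Euclidean norm, $\gamma\wedge\gamma=(|\gamma^+|^2-|\gamma^-|^2)\,dV$, and for the normal form $\gamma=\lambda_1e^{12}+\lambda_2e^{34}$ one computes
\[
|\gamma^\pm|=\tfrac{|\lambda_1\pm\lambda_2|}{\sqrt2},\qquad \norm{\gamma}_\infty=\max|\lambda_i|=\tfrac{|\gamma^+|+|\gamma^-|}{\sqrt2}.
\]
Polarizing the quadratic form gives $\alpha\wedge\beta=(\langle\alpha^+,\beta^+\rangle-\langle\alpha^-,\beta^-\rangle)\,dV$. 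Then Cauchy--Schwarz gives
\[
|\alpha\wedge\beta|\le |\alpha^+||\beta^+|+|\alpha^-||\beta^-|\le(|\alpha^+|+|\alpha^-|)(|\beta^+|+|\beta^-|)=2\norm{\alpha}_\infty\norm{\beta}_\infty.
\]

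\emph{Main obstacle.} The delicate step is the comass identity $\norm{\gamma}_\infty=(|\gamma^+|+|\gamma^-|)/\sqrt2$. I would verify it from the normal form, which is available by an orthonormal change of basis; one has to check that this change of basis preserves the $\pm$ decomposition up to orientation, which only swaps the roles of $\wedge^+$ and $\wedge^-$ and so leaves the formula unchanged. Combining the upper bound with the example gives $A_{2,2}=2$.
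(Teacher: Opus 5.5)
Your proof is correct, and your lower-bound example is the same as the paper's. It is also slightly more precise: the paper's remark that equality holds for $\alpha=\beta$ needs $|\lambda_1|=|\lambda_2|$, since $(e^1\wedge e^2)\wedge(e^1\wedge e^2)=0$.

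For the upper bound you take a genuinely different route. The paper puts only $\alpha$ in normal form, $\alpha=\lambda_1e^1\wedge e^2+\lambda_2e^3\wedge e^4$, and reads off $\alpha\wedge\beta=\bigl(\lambda_1\beta(e_3,e_4)+\lambda_2\beta(e_1,e_2)\bigr)\,dV$. Since $e_1\wedge e_2$ and $e_3\wedge e_4$ are unit simple $2$-vectors, both evaluations are bounded by $\norm{\beta}_\infty$ directly from the definition of comass. This gives $(|\lambda_1|+|\lambda_2|)\norm{\beta}_\infty\le 2\norm{\alpha}_\infty\norm{\beta}_\infty$.

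You instead split $\wedge^2=\wedge^+\oplus\wedge^-$. There the wedge pairing becomes the diagonal indefinite form $\langle\alpha^+,\beta^+\rangle-\langle\alpha^-,\beta^-\rangle$, the comass becomes $(|\gamma^+|+|\gamma^-|)/\sqrt2$, and Cauchy--Schwarz finishes the argument. Your check of that comass formula via $|\lambda_1+\lambda_2|+|\lambda_1-\lambda_2|=2\max_i|\lambda_i|$ is right, and the orientation issue is harmless for the reason you give.

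The two routes buy different things:
\begin{itemize}
\item The paper's argument is shorter and needs no structural information about $\beta$. It also works verbatim in any dimension $2m$, where it yields $A_{2,2m-2}\le m$.
\item Yours is special to $2$-forms on $\R^4$. In exchange it is symmetric in $\alpha,\beta$, gives a closed formula for the comass on $\wedge^2\R^4$, and pins down the equality cases: both forms self-dual or both anti-self-dual, and proportional.
\end{itemize}
The polarization paragraph is never used in the final argument and can be dropped.
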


\begin{proof}
Let \(\alpha\) be a \(2\)-form on a Euclidean \(4\)-space. Write \(\alpha=\lambda_1 e^1\wedge e^2 +\lambda_2 e^3\wedge e^4\) in normal form. If \(\beta\) has comass at most \(1\), then the coefficient of \(\alpha\wedge\beta\) is bounded in absolute value by \(|\lambda_1|+|\lambda_2|\). Since \(\norm{\alpha}_\infty=\max(|\lambda_1|,|\lambda_2|)\), we get
\[
|\alpha\wedge\beta|
\le
2 \norm{\alpha}_\infty \norm{\beta}_\infty \,dV.
\]
Equality is attained by \(\alpha=\beta\).
\end{proof}

The following lemma is the main step in the proof of Theorem \ref{thmB}. 

\begin{lemma}\label{lem:pd-mass-estimate}
Let \(M^d\) be closed and oriented, and let \(c\in \Lambda^k(M)\). Then
\[
\|\operatorname{PD}(c)\|_{\mathrm{st}}
\le
A_{k,d-k}\|c\|^*\operatorname{Vol}(M,g).
\]
\end{lemma}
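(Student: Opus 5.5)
By the duality between the stable norm on $H_{d-k}(M;\R)$ and the comass norm on $H^{d-k}(M;\R)$, it suffices to bound the pairing of $\operatorname{PD}(c)$ against every real cohomology class of comass at most one. For $u\in H^{d-k}(M;\R)$ I would use the isometric duality from Federer:
\[
\|\operatorname{PD}(c)\|_{\mathrm{st}}=\sup\{\langle u,\operatorname{PD}(c)\rangle : u\in H^{d-k}(M;\R),\ \|u\|^*\le 1\}.
\]
For $\|\cdot\|^*$ to be the dual norm, we need the stable norm and comass norm to be dual as Banach spaces, which the paper recalls from \cite{Federer}. Integral classes embed into real homology, and the stable norm of the integral class agrees with the norm of its image, so this dual description applies directly to $\operatorname{PD}(c)$ modulo torsion.

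The key identity is $\langle u,\operatorname{PD}(c)\rangle=\langle c\smile u,[M]\rangle$, up to a sign depending on the ordering convention, which is irrelevant for absolute values. In de Rham terms, if $\alpha$ is a closed $k$-form representing $c$ and $\beta$ a closed $(d-k)$-form representing $u$, this pairing equals $\int_M\alpha\wedge\beta$.

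Now I fix $\varepsilon>0$. By definition of the comass norm on classes, I choose closed representatives with $\|\alpha\|_\infty\le\|c\|^*+\varepsilon$ and $\|\beta\|_\infty\le 1+\varepsilon$. Applying the pointwise inequality defining $A_{k,d-k}$ at each point of $M$ gives
\[
|\langle u,\operatorname{PD}(c)\rangle|\le\int_M|\alpha\wedge\beta|\le A_{k,d-k}(\|c\|^*+\varepsilon)(1+\varepsilon)\operatorname{Vol}(M,g).
\]
Here the pointwise inequality is taken in the Euclidean space $(T_xM,g_x)$, with $|\alpha\wedge\beta|$ meaning the coefficient against $dV$. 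Letting $\varepsilon\to0$ and taking the supremum over $u$ yields the claim.

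The only delicate point is the first step: making sure that the stable norm of $\operatorname{PD}(c)$ is computed by testing against the comass norm on real cohomology. This is exactly Federer's duality theorem, and I would simply cite it. An alternative would be to bound the mass of a current directly by smoothing $\alpha$ into a flat current $\alpha\llcorner[M]$ representing $\operatorname{PD}(c)$ in real homology. That route, however, requires the comparison of stable norm with the mass of real normal currents, which is again Federer's result. The duality route is therefore cleaner.
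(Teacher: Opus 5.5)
Your proof is correct and is essentially the paper's argument seen from the dual side. The paper takes the same near-optimal closed representative $\alpha$ and bounds the mass of the current $T_\alpha(\beta)=\int_M\alpha\wedge\beta$ by testing against all forms of comass at most one; it then uses that the stable norm of $\operatorname{PD}(c)$ is at most the mass of any real cycle representing it. You instead test only against closed forms and invoke the stable norm/comass duality recalled in Section 4. The same pointwise $A_{k,d-k}$ estimate does the work in both, and both rest on the same theorem of Federer; yours uses only the duality statement the paper explicitly records, while the paper's exhibits an explicit real cycle representing $\operatorname{PD}(c)$.
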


\begin{proof}
Choose a closed \(k\)-form \(\alpha\) representing \(c\) with \(\|\alpha\|_\infty\le \|c\|^*+\varepsilon.\) For smooth \((d-k)\)-forms \(\beta\) define a \((d-k)\)-current \(T_\alpha\) by
\[
T_\alpha(\beta)=\int_M \alpha\wedge\beta.
\]
 Then $T_{\alpha}$ represents \(\operatorname{PD}(c)\). By the definition of \(A_{k,d-k}\),
\[
M(T_\alpha)
\le
A_{k,d-k}(\|c\|^*+\varepsilon)\operatorname{Vol}(M,g).
\]
Since the stable norm is the mass norm on real homology we have \(\|\operatorname{PD}(c)\|_{\mathrm{st}}\le M(T_\alpha).\) Taking \(\varepsilon\to0\) gives the claim.
\end{proof}

The following complementary stable systolic inequality in particular also provides another proof of the $S^2 \times S^2$ inequality. In particular we get Theorem \ref{thmB}. 

\begin{theorem}\label{prop:PD}
Let \(M^d\) be closed and oriented. If \(b_k(M)>0\), then
\[
\stsys_k(M,g)\stsys_{d-k}(M,g)
\le
A_{k,d-k}\Theta_{b_k,2}\Vol(M,g).
\]
For $d=4$,
\[
\stsys_2(M,g)
\le
(2 \Theta_{b_2,2})^{\frac{1}{2}}\sqrt{\Vol(M,g)}.
\]
For $d=4$ and $b_2=2$,
\[\stsys_2(M,g)
\le
2\sqrt{\Vol(M,g)}.\]
\end{theorem}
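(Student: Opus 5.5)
The plan is to combine Proposition~\ref{lem:basic-lift}, applied in degree $k$ with $n=2$, and Lemma~\ref{lem:pd-mass-estimate}, using Poincar\'e duality over $\Z$ modulo torsion.

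\textbf{Step 1 (a nonzero residue class).} Since $b_k(M)>0$, the quotient $\Lambda^k(M)/2\Lambda^k(M)\cong(\Z/2)^{b_k}$ has a nonzero class $\rho$. By Proposition~\ref{lem:basic-lift} with $n=2$, there is an integral lift $c\in\Lambda^k(M)$ with $c\equiv\rho\pmod 2$ and
\[
\norm{c}^\ast\le\Theta_{b_k,2}/\stsys_k(M,g).
\]
Because $\rho\neq0$, we know $c\notin 2\Lambda^k(M)$, and in particular $c\neq0$ in $\Lambda^k(M)$.

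\textbf{Step 2 (the dual class is a nonzero lattice vector).} Poincar\'e duality gives an isomorphism $H^k(M;\Z)\to H_{d-k}(M;\Z)$. This isomorphism carries torsion to torsion, so it induces an isomorphism of lattices $\Lambda^k(M)\cong\Lambda_{d-k}(M)$. Hence $\operatorname{PD}(c)$ is a nonzero element of $\Lambda_{d-k}(M)$, and by the definition of the stable systole,
\[
\stsys_{d-k}(M,g)\le\norm{\operatorname{PD}(c)}_{\mathrm{st}}.
\]
We also need the current $T_\alpha$ of Lemma~\ref{lem:pd-mass-estimate} to represent the real image of this integral class. This is the standard compatibility of de Rham and singular Poincar\'e duality, and the lemma already asserts it.

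\textbf{Step 3 (combine).} Lemma~\ref{lem:pd-mass-estimate} gives
\[
\stsys_{d-k}\le A_{k,d-k}\norm{c}^\ast\Vol(M,g)\le A_{k,d-k}\,\Theta_{b_k,2}\,\Vol(M,g)/\stsys_k.
\]
Multiplying through by $\stsys_k$ yields the first inequality. For $d=4$ and $k=2$ we have $\stsys_{d-k}=\stsys_k=\stsys_2$ and $A_{2,2}=2$ by Lemma~\ref{lem:A22}. This gives $\stsys_2^2\le 2\Theta_{b_2,2}\Vol$, and taking square roots gives the second inequality. For $b_2=2$, Proposition~\ref{prop:theta-22} gives $\Theta_{2,2}=2$, so $(2\cdot 2)^{1/2}=2$, which is the third inequality.

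\textbf{Main obstacle.} The only delicate point is Step 2: confirming that the torsion-free Poincar\'e duality isomorphism is compatible with the lattices and norms as used. Unlike Theorem~\ref{prop:product-spheres-no-curv}, no cup-product nonvanishing is needed here. Mod $2$ nontriviality of $c$ is enough, because it only has to guarantee $c\neq0$, and any nonzero class would do; choosing a nonzero mod $2$ class is simply a convenient way to invoke $\Theta_{b_k,2}$.
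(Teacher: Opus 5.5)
Your proposal is correct and follows the paper's proof essentially step for step. You pick a nonzero mod $2$ class, take its short lift $c$ from Proposition~\ref{lem:basic-lift}, observe that $\operatorname{PD}(c)$ is a nonzero vector of $\Lambda_{d-k}(M)$, apply Lemma~\ref{lem:pd-mass-estimate}, and specialize using $A_{2,2}=2$ and $\Theta_{2,2}=2$. Your closing remark is also right: the argument only uses $c\neq0$, so invoking the mod $2$ covering constant is a convenient but not necessarily optimal way to bound the comass of a nonzero integral class.
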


\begin{proof}
Choose a nonzero class \(\rho\in \Lambda^k(M)/2\Lambda^k(M)\). By Proposition \ref{lem:basic-lift}, there is an integral lift \(c\in\Lambda^k(M)\) with \(c\equiv\rho\pmod2\) and
\[
\|c\|^*
\le
\frac{\Theta_{b_k(M),2}}{\operatorname{stsys}_k(M,g)}.
\]
Since \(\rho\neq0\), the class \(c\) is nonzero in the free part of cohomology, so \(\operatorname{PD}(c)\) is nonzero in \(H_{d-k}(M;\Z)/\mathrm{tors}\). Hence
\[
\operatorname{stsys}_{d-k}(M,g)
\le
\|\operatorname{PD}(c)\|_{\mathrm{st}}.
\]
Applying Lemma~\ref{lem:pd-mass-estimate} gives
\[
\operatorname{stsys}_{d-k}(M,g)
\le
A_{k,d-k}
\frac{\Theta_{b_k(M),2}}{\operatorname{stsys}_k(M,g)}
\operatorname{Vol}(M,g).
\]
Rearranging gives the desired conclusion. The other conclusions follow from Lemma \ref{lem:A22} and that $\Theta_{2,2}=2$.
\end{proof}


\section{Positive scalar curvature}
\label{sec:positive-scalar-curvature}

In this section we first prove the sharp line bundle cowaist inequality. On combining it with the mod \(n\) lifting estimate we get a stable two systole estimate under positive scalar curvature.

\subsection{Line bundle cowaist}

We use the following convention. Throughout this section, \(E\to M\) will denote Hermitian vector bundles with metric connection. \(\norm{R^E}_\infty\) will denote the supremum of the comass norm in the two form variables together with the operator norm in the bundle direction.

\begin{definition}\label{def:Ah-cowaist}
Let \((M,g)\) be a closed spin Riemannian manifold. A Hermitian vector bundle \(E\to M\) is called \(\Ah\)-admissible if
\[
\ip{\Ah(TM)\ch(E)}{[M]}\neq0.
\]
The \(\Ah\)-cowaist of \(M\) is
\[
\Ahcw(M,g)
=
\sup \left\{
\norm{R^E}_\infty^{-1} :
E \text{ is } \Ah\text{ admissible} 
\right\}.
\]
Restricting to line bundles gives
\[
\Ahcwline(M,g)
=
\sup \left\{
\norm{R^L}_\infty^{-1} :
L \text{ is } \Ah\text{ admissible} 
\right\}.
\]
\end{definition}

For a closed spin manifold of real dimension \(2m\), Gromov's cowaist scalar curvature inequality gives an upper bound for cowaist under a positive scalar curvature lower bound; see \cite[Theorem 5 1/4]{Gromov} and also \cite[Theorem 3.4]{Stryker}. For line bundles this gives a bound of the form
\[
\Ahcwline(M^{2m},g)
\le
\frac{4m(2m-1)}{\min_M\Scal_g}.
\]
We prove the following sharp refinement.
\begin{proposition}\label{prop:refined-cowaist}
Let \((M^{2m},g)\) be a closed spin Riemannian manifold, and let \(\sigma=\min_M\Scal_g>0\). Then
\[
\Ahcwline(M,g)
\le
\frac{4m}{\sigma}.
\]
Equivalently, every \(\Ah\)-admissible Hermitian line bundle \(L\to M\) with unitary connection satisfies
\begin{equation}\label{eq:waistequiv}
\norm{R^L}_\infty \ge \frac{\sigma}{4m}.
\end{equation}
\end{proposition}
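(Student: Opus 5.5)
We argue by contradiction through the twisted Lichnerowicz formula. Assume $L$ is an $\Ah$-admissible Hermitian line bundle with unitary connection and $\norm{R^L}_\infty<\sigma/(4m)$. Let $S\to M$ be the complex spinor bundle and $D_L$ the Dirac operator on $S\otimes L$. By Atiyah--Singer, $\operatorname{ind}D_L=\ip{\Ah(TM)\ch(L)}{[M]}\neq0$. Hence there is a nonzero harmonic spinor $\psi$, which we may take in $S^+\otimes L$ or $S^-\otimes L$. The Lichnerowicz formula gives
\[
D_L^2=\nabla^\ast\nabla+\frac{\Scal_g}{4}+\mathcal R^L,\qquad \mathcal R^L=\sum_{i<j}R^L(e_i,e_j)\,c(e_i)c(e_j),
\]
with $R^L$ an imaginary-valued $2$-form acting by scalars on $L$. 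Integrating $\ip{D_L^2\psi}{\psi}$ yields $0\ge \int |\nabla\psi|^2+\int(\sigma/4-\|\mathcal R^L\|_{op})|\psi|^2$. So it suffices to prove the pointwise bound $\|\mathcal R^L\|_{op}\le m\,\norm{R^L}_\infty$. This forces $\sigma/4\le m\norm{R^L}_\infty$, contradicting the assumption and giving \eqref{eq:waistequiv}. The statement about $\Ahcwline$ then follows from Definition \ref{def:Ah-cowaist}.

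The key step is this pointwise Clifford estimate, and it is where the improvement over $4m(2m-1)/\sigma$ comes from. At a point, write $iR^L=\omega$, a real $2$-form. Put it in normal form $\omega=\sum_{k=1}^m\lambda_k e^{2k-1}\wedge e^{2k}$ exactly as in the proof of Lemma \ref{lem:wirt-two}, so that $\norm{\omega}_\infty=\max_k|\lambda_k|$. Then $\mathcal R^L=-i\sum_k\lambda_k c(e_{2k-1})c(e_{2k})$. The operators $J_k=i\,c(e_{2k-1})c(e_{2k})$ are commuting self-adjoint involutions, since $(c(e_{2k-1})c(e_{2k}))^2=-1$. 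They are therefore simultaneously diagonalizable with eigenvalues $\pm1$, and $\mathcal R^L=\pm\sum_k\lambda_kJ_k$ has operator norm at most $\sum_k|\lambda_k|\le m\max_k|\lambda_k|$. One should double check the normalization conventions, namely the factor $\tfrac12$ versus the sum over $i<j$ in $\mathcal R^L$, and the relation between comass of $R^L$ and $\lambda_k$. With the convention $\mathcal R^L=\tfrac12\sum_{i,j}$, the result is the same as summing over $i<j$.

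I expect the main obstacle to be bookkeeping rather than ideas. We have to fix consistent sign and normalization conventions for the twisted Lichnerowicz term, and for $\norm{R^L}_\infty$ as comass times operator norm, which for a line bundle is just the comass of $iR^L$. We must also verify that the diagonalization argument is valid pointwise with the comass exactly $\max_k|\lambda_k|$. Sharpness is a separate matter that need not be proved here. Equality in the Clifford estimate would require all $|\lambda_k|$ equal and a spinor in the joint eigenspace with all signs aligned. An example is $(S^2)^m$ with round metrics and the line bundle of Chern class $x_1+\dots+x_m$.
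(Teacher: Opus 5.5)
Your proposal is correct and follows the paper's proof essentially step for step. A nonzero twisted index gives a harmonic spinor, and the Lichnerowicz formula reduces the claim to the pointwise bound $\norm{\mathcal R^L}_{\operatorname{op}}\le m\norm{R^L}_\infty$. That bound comes from putting the curvature form in skew-normal form and simultaneously diagonalizing the commuting self-adjoint involutions $i\,c(e_{2k-1})c(e_{2k})$, which gives eigenvalues $\sum_k\pm\lambda_k$; your normalization checks ($\tfrac12\sum_{i,j}$ versus $\sum_{i<j}$, and comass $=\max_k|\lambda_k|$) are also right.
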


\begin{proof}
Let \(L\to M\) be an \(\Ah\)-admissible Hermitian line bundle with a unitary connection. Since the twisted Dirac index is nonzero, the corresponding twisted Dirac operator has a nonzero harmonic spinor \(\psi\).

The Lichnerowicz formula gives
\[
0
=
\int_M
\left(
|\nabla\psi|^2
+
\frac{\Scal_g}{4}|\psi|^2
+
\ip{\mathcal R^L\psi}{\psi}
\right)dV,
\]
where
\[
\mathcal R^L
=
\frac12
\sum_{i,j}c(e_i)c(e_j)R^L(e_i,e_j).
\]
We claim that \(\norm{\mathcal R^L}_{\operatorname{op}}\le m\norm{R^L}_\infty\). Fix a point $x\in M$. Since $L_x$ is one-dimensional and the connection is unitary, write $R^L=i\alpha$ with $\alpha$ a real two-form at $x$. Choose an orthonormal basis putting $\alpha$ in skew-normal form:
\begin{equation}\label{eq:normal-form}
\alpha
=
\lambda_1 e^1\wedge e^2
+
\lambda_2 e^3\wedge e^4
+
\cdots+
\lambda_n e^{2m-1}\wedge e^{2m}.
\end{equation}
For a two-form in this normal form, its comass is $\max_j |\lambda_j|$. With our convention for the line bundle curvature norm, this means\( \norm{R^L}_x=\norm{\alpha}_{\comass}=\max_j|\lambda_j|.\)
In the basis \eqref{eq:normal-form}, the twisting term becomes
\begin{equation*}
\mathcal R^L
=
\sum_{j=1}^m i\lambda_j c(e_{2j-1})c(e_{2j}).
\end{equation*}
Set $B_j=i c(e_{2j-1})c(e_{2j})$. The operators $B_j$ are self-adjoint involutions, and they commute because they involve disjoint Clifford generators. Hence they can be diagonalized simultaneously, with eigenvalues $\pm1$. Therefore the eigenvalues of $\mathcal R^L$ are of the form $\sum_j \pm \lambda_j$. Therefore we have as claimed,
\begin{equation}\label{eq:op-norm-bound}
\norm{\mathcal R^L}_{\operatorname{op}}
\le
\sum_{j=1}^m |\lambda_j|
\le
m\max_j|\lambda_j|
=
m\norm{R^L}_x.
\end{equation}
It follows that
\[
\ip{\mathcal R^L\psi}{\psi}
\ge
-m\norm{R^L}_\infty|\psi|^2.
\]
Using \(\Scal_g\ge\sigma\), we obtain
\[
0
\ge
\int_M
\left(
|\nabla \psi|^2
+
\left(\frac{\sigma}{4}-m\norm{R^L}_\infty\right)
|\psi|^2
\right)dV.
\]
If \(\norm{R^L}_\infty<\sigma/(4m)\), then the integrand is nonnegative and is positive wherever \(\psi\neq0\), a contradiction. Hence \(\norm{R^L}_\infty\ge\sigma/(4m)\). Taking reciprocals and then taking the supremum over \(\Ah\)-admissible line bundles gives the cowaist inequality.
\end{proof}
\begin{remark}
    We note here that the Clifford algebra estimate above for $\norm{\mathcal{R}^L}_{op}$ is also deduced in the proof of Lemma 3.1 in \cite{CHZ}.  
\end{remark}

\subsection{Cowaist systole estimate} Let \(M^{2m}\) be closed and spin. We say that a class \(\rho\in \Lambda^2(M)/n\Lambda^2(M)\) is \(\Ah\)-detecting if every lift \(c\in \Lambda^2(M)\) with \(c\equiv\rho\pmod n\) satisfies
\[
\ip{\Ah(TM)e^c}{[M]}\neq0.
\]
Equivalently, every line bundle \(L\to M\) with \(c_1(L)\equiv\rho\pmod n\) is \(\Ah\)-admissible.

\begin{proposition}\label{prop:theta-cowaist-systole}
Let \((M^{2m},g)\) be a closed spin Riemannian manifold. Suppose that \(\rho\in \Lambda^2(M)/n\Lambda^2(M)\) is \(\Ah\)-detecting. Then
\[
\Ahcwline(M,g)
\ge
\frac{\stsys_2(M,g)}{2\pi\,\Theta_{b_2(M),n}}.
\]
\end{proposition}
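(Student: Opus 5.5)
We apply Proposition~\ref{lem:basic-lift} with $k=2$ to the $\Ah$-detecting class $\rho$. It yields an integral lift $c\in\Lambda^2(M)$ with $c\equiv\rho \pmod n$ and
\[
\norm{c}^{\ast}\le \frac{\Theta_{b_2(M),n}}{\stsys_2(M,g)}.
\]
Lift $c$ further to $H^2(M;\Z)$. Any choice of torsion part is allowed, since it does not affect the rational characteristic number $\ip{\Ah(TM)e^c}{[M]}$, which is computed from the image of $c$ in real cohomology. Let $L\to M$ be the Hermitian line bundle with $c_1(L)=c$. Because $\rho$ is $\Ah$-detecting, $L$ is $\Ah$-admissible, regardless of which connection we later put on it.

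Next we realise small comass as small curvature. Fix $\varepsilon>0$ and choose a closed real $2$-form $\omega$ representing the image of $c$ in $H^2(M;\R)$ with $\norm{\omega}_\infty\le \norm{c}^\ast+\varepsilon$. With the normalisation $c_1(L)=\left[\frac{i}{2\pi}R^L\right]$, any unitary connection $\nabla_0$ on $L$ has $\frac{i}{2\pi}R^{\nabla_0}$ representing $c$ in de Rham cohomology. Hence $\frac{i}{2\pi}R^{\nabla_0}-\omega=d\eta$ for some real $1$-form $\eta$.

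Set $\nabla=\nabla_0+2\pi i\,\eta$. This is still unitary, because $2\pi i\eta$ is imaginary. Its curvature is $R^{\nabla}=R^{\nabla_0}+2\pi i\,d\eta=-2\pi i\,\omega$. Since $L$ has rank one, the operator norm in the bundle direction is just the absolute value, so
\[
\norm{R^{\nabla}}_\infty=2\pi\norm{\omega}_\infty\le 2\pi\left(\frac{\Theta_{b_2(M),n}}{\stsys_2(M,g)}+\varepsilon\right).
\]

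Finally, by Definition~\ref{def:Ah-cowaist}, $\Ahcwline(M,g)\ge \norm{R^\nabla}_\infty^{-1}$. Letting $\varepsilon\to0$ gives
\[
\Ahcwline(M,g)\ge\frac{\stsys_2(M,g)}{2\pi\,\Theta_{b_2(M),n}}.
\]

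Two degenerate situations need a word. If $\stsys_2=0$ there is nothing to prove. If $\norm{c}^\ast=0$, then $c$ is torsion in real cohomology and the same construction gives curvature as small as we like, so the cowaist is infinite and the inequality holds trivially.

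The only step needing care is the Chern–Weil realisation. We must check that the modified connection is unitary, and that the normalisation constant $2\pi$ together with the curvature-norm convention (comass in the form variables, absolute value on the line) matches the convention in Proposition~\ref{prop:refined-cowaist}. Everything else is a direct combination of Proposition~\ref{lem:basic-lift} with the definition of $\Ah$-detecting.
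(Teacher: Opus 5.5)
Your proof is correct and follows the paper's argument. You take the short lift from Proposition~\ref{lem:basic-lift}, realise a near-optimal comass representative as the curvature of a unitary connection on the line bundle with that Chern class, and use $\Ah$-detection for admissibility. Your explicit gauge modification $\nabla_0+2\pi i\eta$ and your remark on choosing a torsion lift only fill in details the paper asserts without proof; the degenerate-case paragraph is harmless but unnecessary, since the $\varepsilon$-argument already covers it.
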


\begin{proof}
By Proposition~\ref{lem:basic-lift}, the class \(\rho\) has a lift \(c\in \Lambda^2(M)\) such that
\[
\norm{c}^{\ast}
\le
\frac{\Theta_{b_2(M),n}}{\stsys_2(M,g)}.
\]
Choose a closed two form \(\omega\) representing \(c\) with
\[
\norm{\omega}_\infty
\le
\frac{\Theta_{b_2(M),n}}{\stsys_2(M,g)}+\varepsilon.
\]
Let \(L\to M\) be a Hermitian line bundle with \(c_1(L)=c\). Since \(\rho\) is \(\Ah\) detecting, \(L\) is \(\Ah\)-admissible. Since, $c_1(L)=c$, and $\omega$ is a representative of $c$, \(L\) admits a unitary connection with curvature \(R^L=2\pi i\,\omega\). Hence
\[
\norm{R^L}_\infty
\le
2\pi\left(
\frac{\Theta_{b_2(M),n}}{\stsys_2(M,g)}+\varepsilon
\right).
\]
Letting \(\varepsilon\to0\), and taking reciprocals, gives the claim.
\end{proof}

Combining Proposition \ref{prop:refined-cowaist} and Proposition \ref{prop:theta-cowaist-systole} gives a general two systole upper bound via scalar curvature. 
 
\begin{theorem}\label{cor:theta-scalar-general}
Let \((M^{2m},g)\) be a closed spin Riemannian manifold with \(\Scal_g \geq \sigma >0\) and $b_2(M)=b>0$. Suppose that \(\rho\in \Lambda^2(M)/n\Lambda^2(M)\) is \(\Ah\)-detecting. Then
\[
\stsys_2(M,g)
\le
\frac{8\pi m}{\sigma}\Theta_{b,n}.
\]
\end{theorem}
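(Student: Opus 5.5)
The plan is to sandwich the line bundle cowaist between the two estimates already established. Proposition \ref{prop:theta-cowaist-systole} bounds it from below in terms of the stable two systole. Proposition \ref{prop:refined-cowaist} bounds it from above in terms of the scalar curvature lower bound. Chaining them yields the theorem.

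First, the hypotheses of Proposition \ref{prop:theta-cowaist-systole} hold: $M$ is closed and spin and $\rho$ is $\Ah$-detecting. That proposition therefore gives
\[
\frac{\stsys_2(M,g)}{2\pi\,\Theta_{b,n}} \le \Ahcwline(M,g).
\]
Second, set $\sigma' = \min_M \Scal_g$. Since $\Scal_g\ge\sigma>0$ and $M$ is compact, we have $\sigma'\ge\sigma>0$. Proposition \ref{prop:refined-cowaist} then gives
\[
\Ahcwline(M,g)\le \frac{4m}{\sigma'}\le\frac{4m}{\sigma}.
\]
Combining the two displays gives $\stsys_2(M,g)\le 2\pi\Theta_{b,n}\cdot 4m/\sigma = 8\pi m\,\Theta_{b,n}/\sigma$, which is the claim.

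The only point needing care is that both bounds concern the same supremum. This holds because Proposition \ref{prop:theta-cowaist-systole} produces an actual $\Ah$-admissible line bundle $L$ with $\norm{R^L}_\infty\le 2\pi(\Theta_{b,n}/\stsys_2+\varepsilon)$. Inequality \eqref{eq:waistequiv} applies to this particular $L$ and gives $\sigma/(4m)\le 2\pi(\Theta_{b,n}/\stsys_2+\varepsilon)$. Letting $\varepsilon\to0$ and rearranging gives the same conclusion directly, without passing through suprema. Since $\stsys_2>0$ by discreteness of the lattice, this rearrangement is legitimate.

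I expect no real obstacle here, as all the analytic content sits in the two cited propositions. The one thing to check is that the normalisation $R^L=2\pi i\,\omega$ matches the norm convention used in Proposition \ref{prop:refined-cowaist}, where $\norm{R^L}_\infty$ is the comass of $\alpha$ with $R^L=i\alpha$. Here $\alpha = 2\pi\omega$, so the conventions match.
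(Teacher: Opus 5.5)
Your proposal is correct and is the paper's proof: it chains the lower bound $\stsys_2(M,g)/(2\pi\Theta_{b,n})\le\Ahcwline(M,g)$ from Proposition \ref{prop:theta-cowaist-systole} with the upper bound $\Ahcwline(M,g)\le 4m/\sigma$ from Proposition \ref{prop:refined-cowaist}. Your additional checks are sound refinements of the same argument: using $\min_M\Scal_g\ge\sigma$, applying \eqref{eq:waistequiv} directly to the constructed bundle, and verifying the $2\pi$ curvature normalisation.
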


\begin{proof}
Proposition~\ref{prop:theta-cowaist-systole} gives
\[
\frac{\stsys_2(M,g)}{2\pi\,\Theta_{b,n}}
\le
\Ahcwline(M,g).
\]
By Proposition~\ref{prop:refined-cowaist}, we also have \(\Ahcwline(M,g)\le4m/\sigma\). Combining these two inequalities gives the result.
\end{proof}

We will now deduce Theorem \ref{thmC} and Theorem \ref{thmD} as corollaries of Theorem \ref{cor:theta-scalar-general}.

 \subsection{Products of two spheres}

Let \(M=(S^2)^m\). By Lemma \ref{lem:all-odd-class} every integral lift $c$ of all odd mod $2$ class $\rho$ satisfies $c^m \neq0$. As a corollary we obtain. 

\begin{corollary}
$\rho$ is $\Ah$-detecting. 
\end{corollary}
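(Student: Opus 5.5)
To show that $\rho$ is $\Ah$-detecting we must check, for every lift $c\equiv\rho\pmod 2$, that
\[
\ip{\Ah(T(S^2)^m)e^c}{[(S^2)^m]}\neq0 .
\]
The plan is to compute this twisted index explicitly and reduce it to the top cup power $c^m$, which Lemma \ref{lem:all-odd-class} already controls.

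\emph{Step 1: the $\Ah$-class is trivial.} The product $(S^2)^m$ is spin. Its tangent bundle is stably trivial, since $TS^2\oplus\underline{\R}$ is trivial and so $T(S^2)^m\oplus\underline{\R}^m$ is trivial. Hence all Pontryagin classes vanish and $\Ah(T(S^2)^m)=1$. An alternative route to the same conclusion: the Pontryagin classes of $(S^2)^m$ are pulled back from the factors, and each lives in $H^4(S^2)=0$.

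\emph{Step 2: only the top term of $e^c$ survives.} Pairing with $[(S^2)^m]$ picks out the degree $2m$ part of $e^c$, so
\[
\ip{\Ah(T(S^2)^m)e^c}{[(S^2)^m]}=\frac{1}{m!}\ip{c^m}{[(S^2)^m]}.
\]

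\emph{Step 3: conclude.} Write $c=a_1x_1+\cdots+a_mx_m$. Lemma \ref{lem:all-odd-class} gives
\[
c^m=m!\,a_1\cdots a_m\,x_1\cdots x_m ,
\]
with every $a_i$ odd. The index therefore equals $a_1\cdots a_m$, which is an odd integer and hence nonzero. So every line bundle whose first Chern class reduces to $\rho$ mod $2$ is $\Ah$-admissible, which is exactly the definition of $\rho$ being $\Ah$-detecting.

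The only step that needs any justification is Step 1, the vanishing of $\Ah$. It is standard and rests on the stable triviality of $TS^2$, so I do not expect a real obstacle anywhere in the argument.
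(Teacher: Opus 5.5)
Your proposal is correct and follows essentially the same route as the paper: use $\Ah(T(S^2)^m)=1$ to reduce the index pairing to $\frac{1}{m!}\ip{c^m}{[(S^2)^m]}$, then invoke Lemma \ref{lem:all-odd-class} to see that it is nonzero. You add two small refinements that the paper leaves implicit: you justify $\Ah=1$ via stable triviality of $T(S^2)^m$, and you identify the index as the odd integer $a_1\cdots a_m$.
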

\begin{proof}
Indeed let $L$ be the line bundle with $c_1(L)=c$ for any lift $c$ of $\rho$.  Since \(\Ah(T(S^2)^m)=1\), by the index pairing defining admissibility we have,
\[
\ip{\Ah(TM)\mathrm{ch}(L)}{[M]}
= \ip{e^c}{[M]} = \frac{1}{m!} \ip{c^m}{[M]}
\neq0.
\]
\end{proof}

Therefore we get Theorem \ref{thmC}.

\begin{theorem}\label{cor:product-spheres-scalar}
Let \(g\) be a Riemannian metric on \((S^2)^m\) with \(\Scal_g\ge2m\). Then
\[
\stsys_2((S^2)^m,g)
\le
4\pi\,\Theta_{m,2}.
\]
For $S^2 \times S^2$ with $\Scal_g \geq 4$ \[
\stsys_2(S^2 \times S^2,g)
\le
8\pi.
\]
\end{theorem}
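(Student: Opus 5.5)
The plan is to specialize Theorem \ref{cor:theta-scalar-general} to \(M=(S^2)^m\) with \(n=2\) and the all odd residue class \(\rho=x_1+\cdots+x_m \pmod 2\).

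First we check the hypotheses. The product \((S^2)^m\) is closed and spin, because each \(S^2\) is spin and products of spin manifolds are spin. It has real dimension \(2m\), and \(b_2=m>0\) with \(\Lambda^2(M)=\Z^m\) spanned by \(x_1,\ldots,x_m\). The preceding corollary shows that \(\rho\) is \(\Ah\)-detecting. It uses Lemma \ref{lem:all-odd-class}, which gives \(c^m=m!\,a_1\cdots a_m\,x_1\cdots x_m\) with every \(a_i\) odd, together with \(\Ah(T(S^2)^m)=1\). Here \(\Ah\) is trivial because \((S^2)^m\) is stably parallelizable, so all Pontryagin classes vanish.

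Next we apply Theorem \ref{cor:theta-scalar-general} with \(\sigma=2m\) and \(b=m\). This gives
\[
\stsys_2((S^2)^m,g)\le \frac{8\pi m}{2m}\,\Theta_{m,2}=4\pi\,\Theta_{m,2}.
\]
For \(m=2\), the scalar curvature hypothesis reads \(\Scal_g\ge 4\). Proposition \ref{prop:theta-22} gives \(\Theta_{2,2}=2\), so the bound becomes \(8\pi\).

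All of the work is already contained in the earlier results, so no step is a real obstacle. The one point to confirm is that \(\Ah\)-detection holds for every lift of \(\rho\), not only for some lift. This is exactly what Lemma \ref{lem:all-odd-class} provides, because the parity argument applies to every integral lift \(c\equiv\rho\pmod 2\).
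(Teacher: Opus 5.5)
Your proposal is correct and matches the paper's proof: it specializes Theorem~\ref{cor:theta-scalar-general} with $n=2$, the all odd class $\rho$ (which is $\Ah$-detecting by the preceding corollary), $\sigma=2m$ and $b=m$, and then uses $\Theta_{2,2}=2$ to get $8\pi$ when $m=2$. Your added checks are correct details that the paper leaves implicit: $(S^2)^m$ is spin, and $\Ah(T(S^2)^m)=1$ because $(S^2)^m$ is stably parallelizable.
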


\begin{proof}
Here \(\dim (S^2)^m=2m\), \(b_2((S^2)^m)=m\). The class $c$ that is a lift of the all odd class $\rho$ mod \(2\) is \(\Ah\)-detecting. Applying Theorem~\ref{cor:theta-scalar-general} with \(\sigma=2m\) and with mod \(2\) gives the desired conclusion. For $m=2$, using $\Theta_{2,2}=2$ gives the second inequality. 
\end{proof}

\subsection{Odd complex projective spaces}

Let \(m\) be odd, so that \(\CP^m\) is spin. Let \(h\in H^2(\CP^m;\Z)\) be the positive generator, normalized by \(\ip{h^m}{[\CP^m]}=1\). We write \(N=m+1\) and \(a=N/2\). We use the class \(ah\pmod N\). 

This class is \(\Ah\)-detecting. Indeed, if \(c\equiv ah\pmod N\), then \(c=(a+N\ell)h\) for some \(\ell\in\Z\). Since \(c_1(T\CP^m)=Nh\), we have \(\Ah(T\CP^m)e^{Nh/2}=\Td(T^{1,0}\CP^m).\) Therefore by the index pairing defining $\Ah$-admissibility,
\[
\ip{\Ah(T\CP^m)e^c}{[\CP^m]}
=
\ip{\Td(T^{1,0}\CP^m)e^{N\ell h}}{[\CP^m]}
=
\chi(\mathcal O_{\CP^m}(N\ell)).
\]
Where the last equality is by Hirzebruch-Riemann-Roch. Since \(\chi(\mathcal O_{\CP^m}(q))=\binom{q+m}{m}\), this vanishes only for \(q=-1,\ldots,-m\). In particular it never vanishes for \(q=N\ell=(m+1)\ell\). Hence every lift of \(ah\pmod N\) is \(\Ah\)-admissible.

We can now prove Theorem~\ref{thmD}.
\begin{theorem}\label{cor:CP-scalar}
Let \(m\) be odd. If \(g\) is a Riemannian metric on \(\CP^m\) with \(\Scal_g\ge4m(m+1)\), then
\[
\stsys_2(\CP^m,g)
\le
\pi.
\]
\end{theorem}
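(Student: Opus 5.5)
The plan is to apply Theorem~\ref{cor:theta-scalar-general} directly to \(M=\CP^m\) with the residue class \(\rho=ah \pmod N\), where \(N=m+1\) and \(a=N/2\).

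Since \(m\) is odd, \(\CP^m\) is spin, and \(\dim_\R\CP^m=2m\). Moreover \(b_2(\CP^m)=1\) and \(\Lambda^2(\CP^m)=\Z h\). The discussion just before the statement shows that \(ah\pmod N\) is \(\Ah\)-detecting. The argument there goes as follows. Any lift has the form \(c=(a+N\ell)h\). The identity \(\Ah(T\CP^m)e^{Nh/2}=\Td(T^{1,0}\CP^m)\) together with Hirzebruch--Riemann--Roch turns the index pairing into \(\chi(\mathcal O(N\ell))=\binom{N\ell+m}{m}\). This is nonzero, since \(N\ell\) never lies in \(\{-1,\dots,-m\}\). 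So all hypotheses of Theorem~\ref{cor:theta-scalar-general} hold with \(b=1\), \(n=N\), and \(\sigma=4m(m+1)\).

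Theorem~\ref{cor:theta-scalar-general} then gives
\[
\stsys_2(\CP^m,g)\le \frac{8\pi m}{4m(m+1)}\,\Theta_{1,m+1}=\frac{2\pi}{m+1}\,\Theta_{1,m+1}.
\]
By Proposition~\ref{prop:theta-12}, \(\Theta_{1,m+1}=\lfloor (m+1)/2\rfloor=(m+1)/2\), because \(m+1\) is even. The right-hand side is therefore exactly \(\pi\).

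The only subtle point is that Proposition~\ref{lem:basic-lift} uses the worst-case constant \(\Theta_{1,N}\) for an arbitrary residue class. One could sharpen this for the particular class \(a h\): if \(\lambda_1=\stsys_2\) and the generator \(h\) has comass \(1/\stsys_2\), then the shortest lift of \(ah\) is \(ah\) itself, with comass \(a/\stsys_2\). This agrees with the bound \(\Theta_{1,N}=a\). So the general theorem loses nothing here, and no separate argument is needed.

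The main thing to check is that the \(\Ah\)-detecting property holds for every lift, including negative \(\ell\). This is where the vanishing locus \(\{-1,\dots,-m\}\) of \(\binom{q+m}{m}\) matters, and it was already verified above.
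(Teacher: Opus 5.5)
Your proposal is correct and is essentially identical to the paper's proof. The paper applies Theorem~\ref{cor:theta-scalar-general} with $b=1$, $n=N=m+1$, $\sigma=4m(m+1)$ and the $\Ah$-detecting class $ah \pmod N$, obtaining $\stsys_2(\CP^m,g)\le \frac{2\pi}{m+1}\Theta_{1,m+1}$, and then uses $\Theta_{1,m+1}=(m+1)/2$ from Proposition~\ref{prop:theta-12}. Your extra remark, that the shortest lift of $ah$ is $\pm ah$ itself so the worst-case constant loses nothing here, is accurate but not needed.
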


\begin{proof}
Apply Theorem \ref{cor:theta-scalar-general} with \(\sigma=4m(m+1)\) and mod \(N=m+1\) and the mod $N$ class $ah$. Since \(b_2(\CP^m)=1\), this gives
\[\stsys_2(\CP^m,g) \leq  \frac{2\pi}{m+1} \Theta_{1,m+1} .\]
By Proposition \ref{prop:theta-12}, we have \(\Theta_{1,m+1}=(m+1)/2\). Hence \(\stsys_2(\CP^m,g)\le\pi\).
\end{proof}
\bibliographystyle{alpha}
\bibliography{bib}

@article{Banaszczyk1,
    AUTHOR = {Banaszczyk, W.},
     TITLE = {Inequalities for convex bodies and polar reciprocal lattices
              in {${\bf R}^n$}},
   JOURNAL = {Discrete Comput. Geom.},
  FJOURNAL = {Discrete \& Computational Geometry. An International Journal
              of Mathematics and Computer Science},
    VOLUME = {13},
      YEAR = {1995},
    NUMBER = {2},
     PAGES = {217--231},
      ISSN = {0179-5376,1432-0444},
   MRCLASS = {11H60 (11H06 52C07)},
  MRNUMBER = {1314964},
MRREVIEWER = {Martin\ Henk},
       DOI = {10.1007/BF02574039},
       URL = {https://doi.org/10.1007/BF02574039},
}

@article {Banaszczyk2,
    AUTHOR = {Banaszczyk, W.},
     TITLE = {Inequalities for convex bodies and polar reciprocal lattices
              in {$\bold R^n$}. {II}. {A}pplication of {$K$}-convexity},
   JOURNAL = {Discrete Comput. Geom.},
  FJOURNAL = {Discrete \& Computational Geometry. An International Journal
              of Mathematics and Computer Science},
    VOLUME = {16},
      YEAR = {1996},
    NUMBER = {3},
     PAGES = {305--311},
      ISSN = {0179-5376,1432-0444},
   MRCLASS = {11H60 (11H06 52C07)},
  MRNUMBER = {1410163},
MRREVIEWER = {Martin\ Henk},
       DOI = {10.1007/BF02711514},
       URL = {https://doi.org/10.1007/BF02711514},
}

@article{Federer,
    AUTHOR = {Federer, Herbert},
     TITLE = {Real flat chains, cochains and variational problems},
   JOURNAL = {Indiana Univ. Math. J.},
  FJOURNAL = {Indiana University Mathematics Journal},
    VOLUME = {24},
      YEAR = {1974/75},
     PAGES = {351--407},
      ISSN = {0022-2518,1943-5258},
   MRCLASS = {49F22},
  MRNUMBER = {348598},
MRREVIEWER = {F.\ J.\ Almgren, Jr.},
       DOI = {10.1512/iumj.1974.24.24031},
       URL = {https://doi.org/10.1512/iumj.1974.24.24031},
}

@incollection{Gromov,
    AUTHOR = {Gromov, M.},
     TITLE = {Positive curvature, macroscopic dimension, spectral gaps and
              higher signatures},
 BOOKTITLE = {Functional analysis on the eve of the 21st century, {V}ol.
              {II} ({N}ew {B}runswick, {NJ}, 1993)},
    SERIES = {Progr. Math.},
    VOLUME = {132},
     PAGES = {1--213},
 PUBLISHER = {Birkh\"{a}user Boston, Boston, MA},
      YEAR = {1996},
      ISBN = {0-8176-3855-5},
   MRCLASS = {53C21 (53C20 57R20)},
  MRNUMBER = {1389019},
MRREVIEWER = {Christopher\ W.\ Stark},
       DOI = {10.1007/s10107-010-0354-x},
       URL = {https://doi.org/10.1007/s10107-010-0354-x},
}

@article{GHK,
    AUTHOR = {Goodwillie, Thomas G. and Hebda, James J. and Katz, Mikhail
              G.},
     TITLE = {Extending {G}romov's optimal systolic inequality},
   JOURNAL = {J. Geom.},
  FJOURNAL = {Journal of Geometry},
    VOLUME = {114},
      YEAR = {2023},
    NUMBER = {2},
     PAGES = {Paper No. 23, 9},
      ISSN = {0047-2468,1420-8997},
   MRCLASS = {55N45 (55S30)},
  MRNUMBER = {4624541},
MRREVIEWER = {Yuli\ B.\ Rudyak},
       DOI = {10.1007/s00022-023-00685-3},
       URL = {https://doi.org/10.1007/s00022-023-00685-3},
}

@misc{Stryker,
  author        = {Stryker, D.},
  title         = {Stable 2-systole bounds in positive scalar curvature},
  year          = {2026},
  note          = {arXiv:2604.22106}
}

@misc{CHZ,
  author        = {Cecchini, S. and Hirsch, S. and Zeidler, R.},
  title         = {Stable 2-systoles, scalar curvature and spin-c comass bounds},
  year          = {2026},
  note          = {arXiv:2604.25900}
}

@incollection {four,
    AUTHOR = {Gromov, M.},
     TITLE = {Four lectures on scalar curvature},
 BOOKTITLE = {Perspectives in scalar curvature. {V}ol. 1},
     PAGES = {1--514},
 PUBLISHER = {World Sci. Publ., Hackensack, NJ},
      YEAR = {[2023] \copyright 2023},
      ISBN = {978-981-124-998-3; 978-981-124-935-8; 978-981-124-936-5},
   MRCLASS = {53C23 (53-02 53C21)},
  MRNUMBER = {4577903},
MRREVIEWER = {Mikhail\ G.\ Katz},
       DOI = {10.1142/9789},
       URL = {https://doi.org/10.1142/9789},
}

@article {KL88,
    AUTHOR = {Kannan, Ravi and Lov\'{a}sz, L\'{a}szl\'{o}},
     TITLE = {Covering minima and lattice-point-free convex bodies},
   JOURNAL = {Ann. of Math. (2)},
  FJOURNAL = {Annals of Mathematics. Second Series},
    VOLUME = {128},
      YEAR = {1988},
    NUMBER = {3},
     PAGES = {577--602},
      ISSN = {0003-486X,1939-8980},
   MRCLASS = {52A43 (11H06 11H31 52A45 90C10)},
  MRNUMBER = {970611},
MRREVIEWER = {J.\ M.\ Wills},
       DOI = {10.2307/1971436},
       URL = {https://doi.org/10.2307/1971436},
}

@article {aw12,
    AUTHOR = {Averkov, Gennadiy and Wagner, Christian},
     TITLE = {Inequalities for the lattice width of lattice-free convex sets
              in the plane},
   JOURNAL = {Beitr. Algebra Geom.},
  FJOURNAL = {Beitr\"{a}ge zur Algebra und Geometrie. Contributions to
              Algebra and Geometry},
    VOLUME = {53},
      YEAR = {2012},
    NUMBER = {1},
     PAGES = {1--23},
      ISSN = {0138-4821,2191-0383},
   MRCLASS = {52C05 (52A38 52A40 52C15)},
  MRNUMBER = {2890359},
MRREVIEWER = {Mar\'{\i}a\ A.\ Hern\'{a}ndez Cifre},
       DOI = {10.1007/s13366-011-0028-8},
       URL = {https://doi.org/10.1007/s13366-011-0028-8},
}

@article {hebda,
    AUTHOR = {Hebda, James J.},
     TITLE = {The collars of a {R}iemannian manifold and stable isosystolic
              inequalities},
   JOURNAL = {Pacific J. Math.},
  FJOURNAL = {Pacific Journal of Mathematics},
    VOLUME = {121},
      YEAR = {1986},
    NUMBER = {2},
     PAGES = {339--356},
      ISSN = {0030-8730,1945-5844},
   MRCLASS = {53C20},
  MRNUMBER = {819193},
MRREVIEWER = {Karsten\ Grove},
       URL = {http://projecteuclid.org/euclid.pjm/1102702435},
}

@incollection {gromov-systolic,
    AUTHOR = {Gromov, Mikhael},
     TITLE = {Systoles and intersystolic inequalities},
 BOOKTITLE = {Actes de la {T}able {R}onde de {G}\'{e}om\'{e}trie
              {D}iff\'{e}rentielle ({L}uminy, 1992)},
    SERIES = {S\'{e}min. Congr.},
    VOLUME = {1},
     PAGES = {291--362},
 PUBLISHER = {Soc. Math. France, Paris},
      YEAR = {1996},
      ISBN = {2-85629-047-7},
   MRCLASS = {53C22 (53-02 53C20 53C23)},
  MRNUMBER = {1427763},
MRREVIEWER = {Andrea\ Sambusetti},
}

@article {babenko-katz,
    AUTHOR = {Babenko, Ivan and Katz, Mikhail},
     TITLE = {Systolic freedom of orientable manifolds},
   JOURNAL = {Ann. Sci. \'{E}cole Norm. Sup. (4)},
  FJOURNAL = {Annales Scientifiques de l'\'{E}cole Normale Sup\'{e}rieure.
              Quatri\`eme S\'{e}rie},
    VOLUME = {31},
      YEAR = {1998},
    NUMBER = {6},
     PAGES = {787--809},
      ISSN = {0012-9593},
   MRCLASS = {53C23},
  MRNUMBER = {1664222},
MRREVIEWER = {Andrea\ Sambusetti},
       DOI = {10.1016/S0012-9593(99)80003-2},
       URL = {https://doi.org/10.1016/S0012-9593(99)80003-2},
}

@incollection {Katz-suciu,
    AUTHOR = {Katz, Mikhail G. and Suciu, Alexander I.},
     TITLE = {Volume of {R}iemannian manifolds, geometric inequalities, and
              homotopy theory},
 BOOKTITLE = {Tel {A}viv {T}opology {C}onference: {R}othenberg {F}estschrift
              (1998)},
    SERIES = {Contemp. Math.},
    VOLUME = {231},
     PAGES = {113--136},
 PUBLISHER = {Amer. Math. Soc., Providence, RI},
      YEAR = {1999},
      ISBN = {0-8218-1362-5},
   MRCLASS = {53C23 (55Q15)},
  MRNUMBER = {1705579},
MRREVIEWER = {Andrea\ Sambusetti},
       DOI = {10.1090/conm/231/03357},
       URL = {https://doi.org/10.1090/conm/231/03357},
}

@inproceedings {freedman,
    AUTHOR = {Freedman, Michael H.},
     TITLE = {{$Z_2$}-systolic-freedom},
 BOOKTITLE = {Proceedings of the {K}irbyfest ({B}erkeley, {CA}, 1998)},
    SERIES = {Geom. Topol. Monogr.},
    VOLUME = {2},
     PAGES = {113--123},
 PUBLISHER = {Geom. Topol. Publ., Coventry},
      YEAR = {1999},
   MRCLASS = {53C23 (53C20 81P68)},
  MRNUMBER = {1734404},
MRREVIEWER = {Athanase\ Papadopoulos},
       DOI = {10.2140/gtm.1999.2.113},
       URL = {https://doi.org/10.2140/gtm.1999.2.113},
}

@article {bangert-katz,
    AUTHOR = {Bangert, Victor and Katz, Mikhail},
     TITLE = {Stable systolic inequalities and cohomology products},
      NOTE = {Dedicated to the memory of J\"{u}rgen K. Moser},
   JOURNAL = {Comm. Pure Appl. Math.},
  FJOURNAL = {Communications on Pure and Applied Mathematics},
    VOLUME = {56},
      YEAR = {2003},
    NUMBER = {7},
     PAGES = {979--997},
      ISSN = {0010-3640,1097-0312},
   MRCLASS = {53C23 (53C20 53C65)},
  MRNUMBER = {1990484},
MRREVIEWER = {Andrea\ Sambusetti},
       DOI = {10.1002/cpa.10082},
       URL = {https://doi.org/10.1002/cpa.10082},
}

@book {gromov-metric,
    AUTHOR = {Gromov, Misha},
     TITLE = {Metric structures for {R}iemannian and non-{R}iemannian
              spaces},
    SERIES = {Progress in Mathematics},
    VOLUME = {152},
      NOTE = {Based on the 1981 French original [MR0682063 (85e:53051)],
              With appendices by M. Katz, P. Pansu and S. Semmes,
              Translated from the French by Sean Michael Bates},
 PUBLISHER = {Birkh\"{a}user Boston, Inc., Boston, MA},
      YEAR = {1999},
     PAGES = {xx+585},
      ISBN = {0-8176-3898-9},
   MRCLASS = {53C23 (53-02)},
  MRNUMBER = {1699320},
MRREVIEWER = {Igor\ Belegradek},
}

@article{Sha2026,
  author = {Zehao Sha},
  title = {The 2-systole on compact K{\"a}hler surfaces with positive scalar curvature},
  journal = {arXiv preprint},
  year = {2026},
  eprint = {2601.02901},
  archivePrefix = {arXiv},
  primaryClass = {math.DG}
}

@article{Orikasa2025,
  author = {Shunichiro Orikasa},
  title = {Systolic Inequality and Scalar Curvature},
  journal = {arXiv preprint},
  year = {2025},
  eprint = {2509.17376},
  archivePrefix = {arXiv},
  primaryClass = {math.DG}
}

@article {richard,
    AUTHOR = {Richard, Thomas},
     TITLE = {On the 2-systole of stretched enough positive scalar curvature
              metrics on {$\Bbb S^2\times\Bbb S^2$}},
   JOURNAL = {SIGMA Symmetry Integrability Geom. Methods Appl.},
  FJOURNAL = {SIGMA. Symmetry, Integrability and Geometry. Methods and
              Applications},
    VOLUME = {16},
      YEAR = {2020},
     PAGES = {Paper No. 136, 7},
      ISSN = {1815-0659},
   MRCLASS = {53C42 (53C20)},
  MRNUMBER = {4188852},
MRREVIEWER = {Mikhail\ G.\ Katz},
       DOI = {10.3842/SIGMA.2020.136},
       URL = {https://doi.org/10.3842/SIGMA.2020.136},
}

@article {zhu,
    AUTHOR = {Zhu, Jintian},
     TITLE = {Rigidity of area-minimizing {$2$}-spheres in {$n$}-manifolds
              with positive scalar curvature},
   JOURNAL = {Proc. Amer. Math. Soc.},
  FJOURNAL = {Proceedings of the American Mathematical Society},
    VOLUME = {148},
      YEAR = {2020},
    NUMBER = {8},
     PAGES = {3479--3489},
      ISSN = {0002-9939,1088-6826},
   MRCLASS = {53C24 (53C42)},
  MRNUMBER = {4108854},
MRREVIEWER = {Otis\ Chodosh},
       DOI = {10.1090/proc/15033},
       URL = {https://doi.org/10.1090/proc/15033},
}

@article {xu,
    AUTHOR = {Xu, Kai},
     TITLE = {A topological gap theorem for the {$\pi_2$}-systole of
              positive scalar curvature 3-manifolds},
   JOURNAL = {Duke Math. J.},
  FJOURNAL = {Duke Mathematical Journal},
    VOLUME = {174},
      YEAR = {2025},
    NUMBER = {8},
     PAGES = {1647--1664},
      ISSN = {0012-7094,1547-7398},
   MRCLASS = {53C20 (53C21 53C23 53E10)},
  MRNUMBER = {4916112},
MRREVIEWER = {Chady\ El Mir},
       DOI = {10.1215/00127094-2024-0061},
       URL = {https://doi.org/10.1215/00127094-2024-0061},
}

@article{Kumar2026,
  author = {A. Kumar},
  title = {An improved cowaist inequality for line bundles and consequences},
  journal = {arXiv preprint},
  year = {2026},
  eprint = {2604.26891},
  archivePrefix = {arXiv},
  primaryClass = {math.DG}
}

\end{document}